\newcommand{\bea}{\begin{eqnarray}}
\newcommand{\eea}{\end{eqnarray}}
\newcommand{\be}{\begin {equation}}
\newcommand{\ee}{\end{equation}}
\newtheorem{theorem}{Theorem}[section]
\newtheorem{corollary}[theorem]{Corollary}
\newtheorem{lemma}[theorem]{Lemma}
\newtheorem{condition}[theorem]{Condition}
\newtheorem{remark}[theorem]{Remark}
\newtheorem{claim}[theorem]{Claim}
\newtheorem{convention}[theorem]{Convention}
\begin{document}

\title{continued fraction expansions of algebraic numbers}

\author {Xianzu Lin }

\date{ }
\maketitle
   {\small \it College of Mathematics and Computer Science, Fujian Normal University, }\\
    \   {\small \it Fuzhou, {\rm 350108}, China;}\\
      \              {\small \it Email: linxianzu@126.com}
\begin{abstract}
In this paper we establish properties of independence for the
continued fraction expansions of two algebraic numbers. Roughly
speaking, if the continued fraction expansions of two irrational
real algebraic numbers have the same long sub-word, then the two
continued fraction expansions have the same tails. If the two
expansions have mirror symmetry long sub-words, then both the two
algebraic numbers are quadratic. Applying the above results, we
prove a theorem analogous to the Roth's theorem about
approximation by algebraic numbers.

\end{abstract}



Keywords: continued fraction, subspace theorem, irrational
algebraic number.


Mathematics Subject Classification 2010: 11J70, 11J81.

\section{Introduction}
It is a well-known fact that every quadratic irrational real
number can be represented by an eventually periodic continued
fraction. By contrast, no analogous results are known for
algebraic numbers of higher degree. In fact we can not write down
explicitly  the continued fraction expansion  of a single real
algebraic number of degree higher than 2, and we do not know
whether the partial quotients of such expansions are bounded or
unbounded.

In the past 10 years, some breakthroughs have been obtained in
this direction by Adamczewski, Bugeaud and other people
\cite{ab2,ab3,ab4,abd,bu}, highlighted in \cite{bu}. Before
introducing the main result in \cite{bu}, we need some
preparations.

We say that an infinite word $\textbf{a}= a_1a_2\cdots$ of
elements from an alphabet $\Omega$ has $long \ repetition$ if it
satisfies (\romannumeral1), (\romannumeral2) and (\romannumeral3)
of Condition \ref{ccc}, where the length of a finite word $A$ is
denoted by $|A |$, and the mirror image $a_na_{n-1}\cdots a_1$ of
a finite word $B=a_1a_2\cdots a_n$ is denoted by $\overline{B}$.
We say $\textbf{a}= a_1a_2\cdots$ has $long \ mirror \ repetition$
if it satisfies (\romannumeral1'), (\romannumeral2) and
(\romannumeral3) of Condition \ref{ccc}
 \begin{condition} \label{ccc}
There exist three sequences of finite nonempty words
$\{A_n\}_{n\geq1}$, $\{A'_n\}_{n\geq1}$, $\{B_n\}_{n\geq1}$ such
that:
   \begin{enumerate}
\item [(i)] for any $n\geq1$, $A_nB_nA'_nB_n$ is a prefix of
$\textbf{a}$; \item [(i')] for any $n\geq1$,
$A_nB_nA'_n\overline{B_n}$ is a prefix of $\textbf{a}$;
 \item
[(ii)]the sequence $\{|B_n|\}_{n\geq1}$ is strictly increasing;
 \item
[(iii)]there exists a positive constant $L$ such that $$(|
A_n|+|A'_n|)/|B_n|\leq L,$$ for every $n\geq1$.
\end{enumerate}
\end{condition}

The main results in \cite{bu} are the following two theorems:

 \begin{theorem} \label{main1}
Let $$\alpha=[[\alpha];a_1,a_2,\cdots]$$ be  the continued
fraction expansions of a real algebraic number of degree higher
than 2, and let $\{\frac{p_{n}}{q_{n}}\}_{n\geq0}$  be the
sequence of convergents. Assume that the sequence
$\{(q_{n})^{1/n}\}_{n\geq0}$ is bounded. Then, the infinite word
$a_1a_2\cdots$ has no long repetitions.
\end{theorem}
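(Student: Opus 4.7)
\textit{Proof strategy.} The plan is a contradiction argument via Schmidt's Subspace Theorem, in the spirit of Adamczewski--Bugeaud. Suppose the word $a_1 a_2\cdots$ admits a long repetition witnessed by sequences $\{A_n\}$, $\{A'_n\}$, $\{B_n\}$ satisfying (i)--(iii), and introduce the indices $r_n = |A_n|$, $s_n = r_n + |B_n|$, $t_n = s_n + |A'_n|$, $u_n = t_n + |B_n|$. Condition (iii) forces $r_n$, $s_n$, $t_n$, $u_n$ to all be comparable to $|B_n|$ up to a multiplicative constant depending only on $L$.

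The first step would be to construct, from the prefix $A_n B_n A'_n B_n$, the quadratic irrational $\beta_n$ whose continued fraction is $[[\alpha];a_1,\ldots,a_{r_n},\overline{a_{r_n+1},\ldots,a_{t_n}}]$. Since $\alpha$ and $\beta_n$ share their first $u_n$ partial quotients, standard estimates give $|\alpha - \beta_n| \ll q_{u_n}^{-2}$. Using the matrix calculus $M(a) = \begin{pmatrix} a & 1\\ 1 & 0\end{pmatrix}$ and the fact that $B_n$ occurs twice in the prefix, I would then derive the identity
$$\begin{pmatrix} p_{u_n} & p_{u_n-1}\\ q_{u_n} & q_{u_n-1}\end{pmatrix} \begin{pmatrix} p_{s_n} & p_{s_n-1}\\ q_{s_n} & q_{s_n-1}\end{pmatrix}^{-1}
= \begin{pmatrix} p_{t_n} & p_{t_n-1}\\ q_{t_n} & q_{t_n-1}\end{pmatrix} \begin{pmatrix} p_{r_n} & p_{r_n-1}\\ q_{r_n} & q_{r_n-1}\end{pmatrix}^{-1},$$
whose scalar entries give four exact $\mathbb{Z}$-linear identities among the eight involved convergent numerators and denominators.

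The heart of the argument is Schmidt's Subspace Theorem applied to four linearly independent linear forms in four variables, evaluated at the integer vectors $\mathbf{x}_n = (q_{s_n-1},q_{s_n},q_{u_n-1},q_{u_n}) \in \mathbb{Z}^4$. Two forms are coordinate forms and the other two are Diophantine forms of the type $\alpha X_i - (\text{corresponding linear combination of }p_k\text{'s})$, designed to exploit $|\alpha q_k - p_k| \le q_{k+1}^{-1}$. Under the hypothesis $q_n \le c^n$ (equivalent to $\{q_n^{1/n}\}$ bounded), combining $|\alpha - \beta_n| \ll q_{u_n}^{-2}$ with the matrix identity yields the required bound $\prod_{i=1}^{4} |L_i(\mathbf{x}_n)| \ll H(\mathbf{x}_n)^{-\varepsilon}$ for some fixed $\varepsilon = \varepsilon(L,c) > 0$ and infinitely many $n$. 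The theorem then confines the $\mathbf{x}_n$ to a finite union of proper rational hyperplanes; passing to an infinite subsequence lying on one such hyperplane and translating the resulting integer relation via $p_k/q_k \to \alpha$ produces a non-trivial polynomial relation of degree at most $2$ satisfied by $\alpha$, contradicting $\deg\alpha \ge 3$.

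The main obstacle is the numerical verification of the Subspace hypothesis $\prod_i|L_i(\mathbf{x}_n)| \ll H(\mathbf{x}_n)^{-\varepsilon}$: this demands quantifying precisely both the height $H(\mathbf{x}_n)$ and each factor of the product in terms of convergent denominators, and it is here that the bound $q_n \le c^n$ is essential, since it converts ratios of convergent denominators at comparable indices into controlled exponentials and opens the multiplicative gap the theorem requires. A secondary subtlety in the last step is to ensure that the linear relation extracted on the limiting hyperplane is genuinely new, and not already implied by trivial identities such as $p_k q_{k-1} - p_{k-1} q_k = \pm 1$, so that the limit honestly yields a degree-$\le 2$ algebraic relation on $\alpha$.
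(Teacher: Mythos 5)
Your overall strategy---approximate $\alpha$ by the quasi-periodic quadratic $\beta_n$, feed the data into Schmidt's Subspace Theorem, and extract a forbidden algebraic relation---is the Adamczewski--Bugeaud route and is also what underlies this paper: Theorem \ref{main1} is not proved directly here but deduced from Theorem \ref{main4} by taking $\alpha=\alpha'$, $\tilde A_n=A_n$, $\tilde A'_n=A_nB_nA'_n$, $\tilde B_n=B_n$, so that (i)--(iii) of Condition \ref{c1} hold for the pair $(\textbf{a},\textbf{a})$ and $\bigl|\,|\tilde A_n|-|\tilde A'_n|\,\bigr|\geq|B_n|\to\infty$ forces $\alpha$ to be quadratic, a contradiction. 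Your matrix identity $M_{u_n}M_{s_n}^{-1}=M_{t_n}M_{r_n}^{-1}$ is correct and is exactly (\ref{for77}) in this specialization. The problem is in how you set up the Subspace step. The vector $\mathbf{x}_n=(q_{s_n-1},q_{s_n},q_{u_n-1},q_{u_n})$ cannot carry forms ``of the type $\alpha X_i-(\text{combination of }p_k)$'': the numerators $p_k$ are not coordinates of $\mathbf{x}_n$, and they are related to the chosen $q$'s only through the $n$-dependent integer matrix $M_{t_n}M_{r_n}^{-1}$, so such expressions are not linear forms with fixed algebraic coefficients in the variables of $\mathbf{x}_n$ at all; with only denominators as coordinates there is no visible way to make $\prod_i|L_i(\mathbf{x}_n)|$ beat $H(\mathbf{x}_n)^{-\varepsilon}$. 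The working choice is the vector of $2\times2$ minors
$$\phi_n=(q_{r_n}q_{t_n-1}-q_{r_n-1}q_{t_n},\ q_{r_n}p_{t_n-1}-q_{r_n-1}p_{t_n},\ p_{r_n}q_{t_n-1}-p_{r_n-1}q_{t_n},\ p_{r_n}p_{t_n-1}-p_{r_n-1}p_{t_n}),$$
i.e.\ the coefficient vector of the quadratic polynomial vanishing at $\beta_n$, with $L_1=\alpha^2X_1-\alpha X_2-\alpha X_3+X_4$, $L_2=\alpha X_1-X_2$, $L_3=\alpha X_1-X_3$, $L_4=X_1$; the crucial smallness of $L_1(\phi_n)$ comes from rewriting $\phi_n$ at the shifted indices $(s_n,u_n)$ via your matrix identity, as in (\ref{for67}).

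The second gap is the endgame. A single rational hyperplane relation among the minors does not ``translate via $p_k/q_k\to\alpha$'' into a degree-$\leq2$ relation for $\alpha$: the paper needs a case analysis on whether the preperiod lengths stay bounded, the normalization of Convention \ref{convi1} to exclude the degenerate case where $M_{t_n}M_{r_n}^{-1}$ is eventually constant, and two further descents through the Subspace Theorem (Lemma \ref{ssTte}, and the forms $L'_i$, $L''_1$ in Case 3 of the proof of Theorem \ref{main4}) before reaching a contradiction with $p_nq_{n-1}-p_{n-1}q_n=\pm1$. What you call a ``secondary subtlety'' is in fact the bulk of the proof, and as sketched your argument does not close it.
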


 \begin{theorem} \label{main2}
Let assumptions be as above.  Then, the infinite word
$a_1a_2\cdots$ has no long mirror repetitions.
\end{theorem}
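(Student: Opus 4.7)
The plan is to adapt the Schmidt--subspace--theorem strategy that proves Theorem~\ref{main1}, substituting the transpose identity for continued--fraction matrices in place of the periodicity input. Assume, for contradiction, that $\mathbf{a}=a_1a_2\cdots$ admits a long mirror repetition witnessed by sequences $(A_n),(A'_n),(B_n)$. For any finite word $w=c_1\cdots c_k$ in positive integers set
\[
M_w=\prod_{i=1}^k \begin{pmatrix} c_i & 1\\ 1 & 0\end{pmatrix}.
\]
Each factor is symmetric, so transposition reverses products: $M_{\overline{w}}=M_w^T$. Writing $r_n=|A_n|$, $s_n=r_n+|B_n|$, $t_n=s_n+|A'_n|$, $u_n=t_n+|B_n|$, the fact that $A_nB_nA'_n\overline{B_n}$ is a prefix of $\mathbf{a}$ yields the identity
\[
\begin{pmatrix} p_{u_n} & p_{u_n-1}\\ q_{u_n} & q_{u_n-1}\end{pmatrix}=\begin{pmatrix}[\alpha] & 1\\ 1 & 0\end{pmatrix}M_{A_n}M_{B_n}M_{A'_n}M_{B_n}^T.
\]
This is the engine that will convert the combinatorial mirror symmetry into Diophantine information.

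From this identity I would extract an integer point $\mathbf{x}_n\in\mathbb{Z}^3$ built out of carefully chosen entries of the right--hand side, together with three linearly independent linear forms $L_1,L_2,L_3$ with real algebraic coefficients in $\mathbb{Q}(\alpha)$, such that
\[
|L_1(\mathbf{x}_n)L_2(\mathbf{x}_n)L_3(\mathbf{x}_n)| \ll \|\mathbf{x}_n\|^{-\varepsilon}
\]
for some fixed $\varepsilon>0$. Two ingredients drive the inequality. First, the convergent bound $|q_{u_n}\alpha-p_{u_n}|\le q_{u_n+1}^{-1}$, combined with the hypothesis that $(q_n)^{1/n}$ is bounded and the estimate $u_n\le(2+L)|B_n|$ coming from Condition~\ref{ccc}(iii), produces a very small value of $L_1$. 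Second, the transpose factor $M_{B_n}^T$ forces $\alpha$ to be close to a real fixed point of the M\"obius transformation associated with the symmetric matrix $M_{B_n}M_{A'_n}M_{B_n}^T$; that fixed point is a quadratic irrational whose proximity to $\alpha$ supplies a second small linear form, with coefficients of height controlled in terms of $\|M_{B_n}\|$.

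Schmidt's subspace theorem then forces infinitely many of the $\mathbf{x}_n$ into a fixed proper rational subspace of $\mathbb{Q}^3$; unfolding the resulting linear dependence produces a nontrivial polynomial of degree at most two vanishing at $\alpha$, contradicting $\deg\alpha>2$. The principal difficulty lies in the construction just sketched: verifying that $|L_1L_2L_3|$ decays strictly faster than a power of $\|\mathbf{x}_n\|$. In contrast with the plain--repetition argument, where the quadratic surrogate for $\alpha$ is the purely periodic number $[\overline{B_n}]$, here the surrogate arises from the palindromic block $B_nA'_n\overline{B_n}$, so the second small form absorbs an unavoidable penalty proportional to $|A'_n|$; this penalty is exactly what the linear bound $(|A_n|+|A'_n|)/|B_n|\le L$ is designed to dominate, and carrying out the resulting quantitative bookkeeping is the heart of the proof.
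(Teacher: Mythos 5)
You have correctly identified the one structural ingredient that distinguishes the mirror case from the plain-repetition case, namely the transpose identity $M_{\overline{B}}=M_B^{T}$ and the resulting factorization of the matrix attached to the prefix $A_nB_nA'_n\overline{B_n}$; this is exactly the identity on which the paper's argument rests (the paper obtains Theorem \ref{main2} as the special case $\alpha=\alpha'$ of Theorem \ref{main5}). However, everything after that identity in your write-up is a promissory note rather than a proof. The integer points, the linear forms, and the verification of the product bound are precisely the content of the theorem, and you explicitly defer them (``carrying out the resulting quantitative bookkeeping is the heart of the proof''). As written, the argument cannot be checked because its core is absent.

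Beyond incompleteness, the one concrete mechanism you do describe has a conceptual obstruction. Your ``second small linear form'' is supposed to come from the proximity of $\alpha$ to a fixed point of the M\"obius map attached to $M_{B_n}M_{A'_n}M_{B_n}^{T}$, ``with coefficients of height controlled in terms of $\|M_{B_n}\|$.'' But Theorem \ref{main8} requires the forms $L_0,\dots,L_n$ to be \emph{fixed}, independent of the solution $\mathbf{x}$; forms whose coefficients vary with $n$ are not admissible, and exploiting a varying quadratic approximant instead requires Schmidt's theorem on approximation by bounded-degree algebraic numbers together with a height lower bound you have not supplied. The paper sidesteps this entirely: in the proof of Theorem \ref{main5} the integer points are quadruples such as
\[
\bigl(q_{k_n}q'_{l_n+m_n}+q_{k_n-1}q'_{l_n+m_n-1},\;\dots,\;p_{k_n}p'_{l_n+m_n}+p_{k_n-1}p'_{l_n+m_n-1}\bigr)\in\mathbb{Z}^{4},
\]
the four forms have the fixed coefficients $1,\alpha,\alpha',\alpha\alpha'$, and the transpose identity is used only to show that $\alpha X_1-X_3$ is small on these points. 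Finally, your last step --- passing from ``infinitely many $\mathbf{x}_n$ in a fixed proper subspace'' to ``a quadratic polynomial vanishing at $\alpha$'' --- is also not immediate: the paper needs a three-case analysis, further applications of the subspace theorem in dimensions $3$ and $2$, and Lemma \ref{ssTte} to turn the linear relation into a contradiction with $\deg\alpha>2$. So the proposal points in the right direction but omits the proof's substance, and the step it does specify is incompatible with the subspace theorem as stated.
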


For an infinite word $\textbf{a}= a_1a_2\cdots$ of elements from
an alphabet $\Omega$ and a positive integer $n$, set
$$p(\textbf{a},n):=Card\{a_{i+1}\cdots a_{i+n}| i\geq0\}.$$ Then
$p(\textbf{a},n)$ is the number of distinct blocks of $n$
consecutive letters occurring in $\textbf{a}$.
 Theorem \ref{main1} implies that
$$\lim_{n\rightarrow +\infty} \frac{p(\textbf{a},n)}{n}=+\infty,$$ where $\textbf{a}$ is the continued fraction expansion of an algebraic number of degree
higher than 2. This result combining with a fundamental property
about automatic sequences (cf.\cite{co}) immediately implies that
the continued fraction expansion of an algebraic number of degree
higher than 2 can not be generated by a finite automaton.

Theorem \ref{main1}  and its corollaries are very similar to the
corresponding results  about expansions of algebraic numbers to
integer bases \cite{ab}. In \cite{ab1}, Adamczewski and Bugeaud
further explored the independence of $b$-ary expansions of two
irrational real algebraic numbers $\alpha$ and $\beta$.

Let $\textbf{a}= a_1a_2\cdots$ and $\textbf{a}'= a'_1a'_2\cdots$
be two infinite words of elements from an alphabet $\Omega$. The
following is a condition about the pair
$(\textbf{a},\textbf{a}')$:

 \begin{condition} \label{c1}
There exist three sequences of finite nonempty words
$\{A_n\}_{n\geq1}$, $\{A'_n\}_{n\geq1}$, $\{B_n\}_{n\geq1}$ such
that:
   \begin{enumerate}
\item [(i)] for any $n\geq1$, the word $A_nB_n$ is a prefix of the
word $\textbf{a}$ and the word $A'_nB_n$ is a prefix of the word
$\textbf{a}'$; \item [(i')] for any $n\geq1$, the word $A_nB_n$ is
a prefix of the word $\textbf{a}$ and the word
$A'_n\overline{B_n}$ is a prefix of the word $\textbf{a}'$;
 \item
[(ii)]the sequence $\{|B_n|\}_{n\geq1}$ tends to infinity; \item
[(iii)]there exists a positive constant $L$ such that $$(|
A_n|+|A'_n|)/|B_n|\leq L,$$ for each $n\geq1$.
\end{enumerate}
\end{condition}

The main result in \cite{ab1} is:
 \begin{theorem} \label{main3}
Let $b\geq2$ be a fixed integer. Let $\alpha$ and $\alpha'$ be two
irrational real algebraic numbers. If their $b$-ary expansions
$$\alpha=[\alpha]+0.a_1a_2\cdots,$$ and
$$\alpha'=[\alpha]+0.a'_1a'_2\cdots$$ satisfy (i), (ii)and (iii) of Condition \ref{c1},
then the two infinite words $\textrm{a}= a_1a_2\cdots$  and
$\textrm{a}'= a'_1a'_2\cdots$ have the same tail.
\end{theorem}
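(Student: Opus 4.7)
The plan is to convert the shared-block combinatorial data of Condition \ref{c1} into simultaneous strong rational approximations of $\alpha$ and $\alpha'$, and then invoke the $p$-adic Subspace Theorem (Schlickewei's form) to force a $\mathbb{Q}$-linear relation of the precise shape $b^r\alpha - b^{r'}\alpha' \in \mathbb{Z}$, equivalent to the two $b$-ary expansions sharing a tail. Concretely, for each $n \geq 1$ set $r_n = |A_n|$, $r'_n = |A'_n|$, $s_n = |B_n|$, $q_n = b^{r_n+s_n}$, $q'_n = b^{r'_n+s_n}$, and let $p_n$, $p'_n$ be the integer parts of $b^{r_n+s_n}\alpha$ and $b^{r'_n+s_n}\alpha'$ respectively. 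Hypothesis (i) of Condition \ref{c1} is then equivalent to
\[|q_n\alpha - p_n| < 1, \qquad |q'_n\alpha' - p'_n| < 1, \qquad p_n \equiv p'_n \pmod{b^{s_n}},\]
the last congruence recording that both $p_n$ and $p'_n$ end in the $s_n$ digits of $B_n$. By (iii) the height $H_n := \max(q_n, q'_n, |p_n|, |p'_n|)$ satisfies $\log H_n \leq (L+1) s_n \log b + O(1)$, so all four coordinates of $\mathbf{x}_n := (q_n, q'_n, p_n, p'_n)$ lie on essentially the same scale.

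\textbf{Subspace Theorem step.} I apply Schlickewei's $p$-adic Subspace Theorem to the integer vectors $\mathbf{x}_n \in \mathbb{Z}^4$. At the Archimedean place I use the four linearly independent forms $X_1$, $X_2$, $\alpha X_1 - X_3$, $\alpha' X_2 - X_4$, whose values at $\mathbf{x}_n$ are $q_n$, $q'_n$, $O(1)$, $O(1)$. At each prime $\ell$ dividing $b$ I use a system of four independent forms including $X_1$, $X_2$ and $X_3 - X_4$: the first two capture the $\ell$-adic smallness $|q_n|_\ell$, $|q'_n|_\ell$ (since $q_n$, $q'_n$ are pure powers of $b$), and the third captures $|p_n - p'_n|_\ell \leq |b|_\ell^{s_n}$ from the congruence. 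A product-formula calculation shows that the total product of the chosen forms at $\mathbf{x}_n$ across all places is bounded above by $b^{-c s_n}$ for some absolute $c > 0$, whereas $H_n^{-\varepsilon} \geq b^{-\varepsilon(L+1)s_n + O(1)}$; choosing $\varepsilon < c/(L+1)$ and using $s_n \to \infty$, the hypothesis of the theorem is verified for all large $n$ and infinitely many $\mathbf{x}_n$ are confined to a common proper rational subspace: there exist $(c_1, c_2, c_3, c_4) \in \mathbb{Z}^4 \setminus \{0\}$ and an infinite set $\mathcal{N}$ such that $c_1 q_n + c_2 q'_n + c_3 p_n + c_4 p'_n = 0$ for every $n \in \mathcal{N}$.

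\textbf{Extraction and main obstacle.} Writing $p_n = b^{s_n} P_n + \beta_n$ and $p'_n = b^{s_n} P'_n + \beta_n$, with $\beta_n$ the integer value of $B_n$, the relation reduced modulo $b^{s_n}$ reads $(c_3 + c_4)\beta_n \equiv 0 \pmod{b^{s_n}}$, and since $\beta_n < b^{s_n}$ this forces $c_3 + c_4 = 0$ (excepting the degenerate case $\beta_n = 0$ of trailing zero blocks, handled separately). Setting $c_4 = -c_3$, the relation becomes $c_1 b^{r_n} + c_2 b^{r'_n} + c_3 M_n = 0$ with $M_n = b^{r_n}\alpha - b^{r'_n}\alpha' + O(b^{-s_n})$; passing to a sub-sequence on which $r_n - r'_n$ is eventually a fixed integer $d$ (the alternatives $\pm \infty$ would force $\alpha$ or $\alpha'$ to be rational), letting $n \to \infty$, and performing an $\ell$-adic analysis for each prime $\ell \nmid b$ dividing $c_3$, one concludes that $c_3$ is a unit times a power of $b$ and the relation takes the required form $b^r\alpha - b^{r'}\alpha' \in \mathbb{Z}$ for some $r, r' \geq 0$. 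The principal obstacle is the bookkeeping for the Subspace Theorem: the non-Archimedean forms at each prime $\ell \mid b$ must be chosen so that the full $\ell$-adic saving from the congruence is captured and all linear-independence hypotheses hold simultaneously at every place. A secondary subtlety is the arithmetic argument pinning down the denominator of the resulting rational relation to a power of $b$, thereby upgrading a generic $\mathbb{Q}$-linear dependence between $\alpha$ and $\alpha'$ to the precise ``same tail'' conclusion.
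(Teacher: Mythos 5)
The paper itself does not prove Theorem \ref{main3}; it is imported from \cite{ab1}, where the argument is essentially the one you outline: the $p$-adic (Schlickewei) Subspace Theorem applied to integer vectors built from powers of $b$ and truncations of the two expansions. The difference is that Adamczewski and Bugeaud work with the three-dimensional vectors $(b^{r_n},\,b^{r'_n},\,\lfloor b^{r_n}\alpha\rfloor-\lfloor b^{r'_n}\alpha'\rfloor)$, for which the shared block $B_n$ gives the archimedean smallness $|b^{r_n}\alpha-b^{r'_n}\alpha'-(\lfloor b^{r_n}\alpha\rfloor-\lfloor b^{r'_n}\alpha'\rfloor)|\le b^{-s_n}$ directly, so the mod-$b^{s_n}$ bookkeeping in your four-variable setup never arises. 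Your variant is workable in principle, and your place-by-place product computation is correct, but the endgame as written has three genuine gaps.

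First, the claim that $(c_3+c_4)\beta_n\equiv 0\pmod{b^{s_n}}$ together with $\beta_n<b^{s_n}$ forces $c_3+c_4=0$ is false: take $b=10$, $\beta_n=5\cdot 10^{s_n-1}$, $c_3+c_4=2$. If $c_3+c_4=N\neq 0$, the congruence only forces $\beta_n$ to be a multiple of $b^{s_n}/\gcd(b^{s_n},N)$, i.e.\ $B_n$ is a bounded-length word followed entirely by zeros; excluding that requires a further argument (long zero blocks give approximations $|\alpha-p/b^k|\ll b^{-k(1+1/L)}$ with denominators restricted to powers of $b$, which contradicts Ridout's theorem, not Roth's), and this is much more than the "degenerate case $\beta_n=0$" you set aside. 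Second, your dichotomy "$r_n-r'_n$ tends to a finite $d$ or to $\pm\infty$, the latter forcing rationality" silently assumes $c_3\neq 0$; the Subspace Theorem cannot exclude that infinitely many $\mathbf{x}_n$ lie in the subspace $c_1X_1+c_2X_2=0$, in which case you learn only that $r_n-r'_n$ is constant and obtain no linear relation between $\alpha$ and $\alpha'$. That degenerate subspace needs its own treatment (e.g.\ Ridout applied to $b^d\alpha-\alpha'$, or a second application of the Subspace Theorem inside the subspace). Third, the denominator of the resulting relation is not controlled by an "$\ell$-adic analysis of $c_3$": the subspace vector is only defined up to scaling, and nothing forces $c_3$ to be a power of $b$ times a unit. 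The correct mechanism is integrality: writing $b^{d}\alpha-\alpha'=u/v$ in lowest terms, the inequality $|b^{r'_n}u/v-(P_n-P'_n)|\le b^{-s_n}<1/v$ forces $b^{r'_n}u/v\in\mathbb{Z}$, hence $v\mid b^{r'_n}$, which is exactly the same-tail conclusion. All three gaps are repairable, but as it stands the proof is incomplete precisely at the points you yourself flag as the obstacles.
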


In this paper, we show that similar results of independence hold
for the continued fraction expansions of two algebraic numbers.
Our main results are:

 \begin{theorem} \label{main4}
Let $$\alpha=[[\alpha];a_1,a_2,\cdots],$$ and
$$\alpha'=[[\alpha'];a'_1,a'_2,\cdots]$$ be  the continued fraction expansions of two
irrational real algebraic numbers, and let
$\{\frac{p_{n}}{q_{n}}\}_{n\geq0}$ and
$\{\frac{p'_{n}}{q'_{n}}\}_{n\geq0}$ be  respectively the sequence
of convergents of $\alpha$ and $\alpha'$. Assume that the sequence
$\{(q_{n}q'_{n})^{1/n}\}_{n\geq0}$ is bounded. If the two infinite
words $\textbf{a}= a_1a_2\cdots$  and $\textbf{a}'=
a'_1a'_2\cdots$ satisfy $(i),(ii)$ and $(iii)$  of Condition
\ref{c1}, then they have the same tail. Moreover, if
$$\limsup_{n\rightarrow\infty} \mid|A_n|-|A'_n|\mid=+\infty,$$ then
both $\alpha$ and $\alpha'$ are quadratic irrationals
\end{theorem}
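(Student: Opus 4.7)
The plan is to follow the strategy developed by Adamczewski and Bugeaud in \cite{ab1} and \cite{bu}: construct a sequence of integer points $\mathbf{z}_n\in\mathbb{Z}^N$ together with $N$ linear forms whose coefficients are algebraic (involving $\alpha,\alpha'$), verify that the product of $|L_i(\mathbf{z}_n)|$ is much smaller than a fixed negative power of the height of $\mathbf{z}_n$, apply the Schmidt subspace theorem to place infinitely many $\mathbf{z}_n$ on a single rational hyperplane, and finally decode the resulting identity into the structural conclusion that $\alpha$ and $\alpha'$ have the same tail.

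Set $r_n=|A_n|$, $s_n=|A'_n|$, $t_n=|B_n|$. Let $M^W$ denote the M\"obius transformation associated with a finite word $W$, and let $\omega_n$ be the quadratic irrational with purely periodic continued fraction $[\overline{B_n}]$. Since $A_nB_n$ is a prefix of $\mathbf{a}$, the $(r_n+t_n)$th convergent of $\alpha$ gives $|\alpha q_{r_n+t_n}-p_{r_n+t_n}|\le 1/q_{r_n+t_n}$, and analogously for $\alpha'$. The shared block $B_n$ furnishes a second, independent source of approximation: because the tail of $\alpha$ beginning after $A_n$ starts with $B_n$, it differs from $\omega_n$ by at most $O(1/q(B_n)^2)$, so the algebraic number $\beta_n:=M^{A_n}(\omega_n)$ approximates $\alpha$ to within $O\bigl((q_{r_n}q(B_n))^{-2}\bigr)$; similarly $\beta'_n:=M^{A'_n}(\omega_n)$ approximates $\alpha'$ to within $O\bigl((q'_{s_n}q(B_n))^{-2}\bigr)$. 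Since $\beta_n,\beta'_n$ lie in the same quadratic field $\mathbb{Q}(\omega_n)$, eliminating $\omega_n$ yields a $\mathbb{Q}$-linear relation $P_n\beta_n+Q_n\beta'_n+R_n=0$ whose coefficients $P_n,Q_n,R_n$ are integers polynomially bounded in $q_{r_n},q'_{s_n},p(B_n),q(B_n)$. Replacing $\beta_n,\beta'_n$ by the nearby $\alpha,\alpha'$ then shows that $P_n\alpha+Q_n\alpha'+R_n$ is very small.

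Next, I would package these three smallness bounds into a Schmidt system on the integer vector $\mathbf{z}_n=(q_{r_n+t_n},p_{r_n+t_n},q'_{s_n+t_n},p'_{s_n+t_n},P_n,Q_n,R_n)$ (or an appropriate lower-dimensional projection), with linear forms $\alpha X_1-X_2$, $\alpha'X_3-X_4$, $\alpha X_5+\alpha'X_6+X_7$, and the coordinate forms needed to fill out the system. The hypothesis that $(q_nq'_n)^{1/n}$ is bounded, combined with the polynomial bound on $q(B_n)$ in terms of $q_{r_n+t_n}/q_{r_n}$, converts the three estimates above into $\prod_i|L_i(\mathbf{z}_n)|\ll H(\mathbf{z}_n)^{-\delta}$ for some $\delta>0$. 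Schmidt's subspace theorem then forces infinitely many $\mathbf{z}_n$ into a fixed proper rational subspace.

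Unwinding the resulting integer linear identity together with the M\"obius structure of $M^{A_nB_n}$ and $M^{A'_nB_n}$ shows that, for infinitely many $n$, the transformations $M^{A_n}$ and $M^{A'_n}$ differ by a fixed element of $\mathrm{GL}_2(\mathbb{Q})$ in their action on $\omega_n$; this forces the continued fractions of $\alpha$ and $\alpha'$ to coincide from some index onward, which is exactly the same-tail conclusion. For the final clause, if $\limsup\mid|A_n|-|A'_n|\mid=+\infty$, the rigidity of this conjugation identity across infinitely many widely differing values of $r_n-s_n$ forces $\alpha$, and hence $\alpha'$ by tail equality, to satisfy a non-trivial integer M\"obius equation $M(\alpha)=\alpha$, yielding eventual periodicity and thus quadraticity. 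The main technical obstacle I anticipate is the construction in the third paragraph: choosing $N$, the forms $L_i$, and the vector $\mathbf{z}_n$ so that the heights and smallness estimates really conspire to give a strictly positive $\delta$, which is precisely where the boundedness hypothesis on $(q_nq'_n)^{1/n}$ plays its decisive role and where the bulk of the computation will live.
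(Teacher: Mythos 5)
Your overall template (integer points, linear forms, subspace theorem, decoding) is the right one, but the specific construction you propose does not close, and the gap is at the quantitative heart of the argument. When you eliminate $\omega_n=[\overline{B_n}]$ from $\beta_n=M^{A_n}(\omega_n)$ and $\beta'_n=M^{A'_n}(\omega_n)$, the resulting integer coefficients satisfy $|P_n|\asymp q_{k_n}'^2\,q(B_n)$ and $|Q_n|\asymp q_{k_n}^2\,q(B_n)$ up to the usual normalizations (here $k_n=|A_n|$, $l_n=|A'_n|$, and $q(B_n)\asymp q_{k_n+m_n}/q_{k_n}$), while the best you get from $|\alpha-\beta_n|\ll q_{k_n+m_n}^{-2}$ and $|\alpha'-\beta'_n|\ll q_{l_n+m_n}'^{-2}$ is $|P_n\alpha+Q_n\alpha'+R_n|\ll q(B_n)^{-1}$. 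For a three-variable form $\alpha X_5+\alpha'X_6+X_7$ together with the coordinate forms $X_5,X_6$ (or inside your seven-variable system, after the convergent forms cancel against their coordinate partners), the product of forms is then of order $|P_n||Q_n|q(B_n)^{-1}\asymp q_{k_n}^2q_{l_n}'^2q(B_n)$, a large \emph{positive} power of the height rather than $\ll H^{-\delta}$. The paper's proof succeeds precisely because it never eliminates a quadratic: it evaluates $L_1=\alpha\alpha'X_1-\alpha X_2-\alpha'X_3+X_4$ on the bilinear vector $\phi_n$ with entries such as $q_{k_n}q'_{l_n-1}-q_{k_n-1}q'_{l_n}$, whose size is only $q_{k_n}q'_{l_n}$, and uses the matrix identity (\ref{for77}) to show that these same integers coincide with the corresponding combinations at the shifted indices $k_n+m_n$, $l_n+m_n$, where $L_1$ evaluates to $O(q_{k_n+m_n}^{-1}q_{l_n+m_n}'^{-1})$. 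That identity is the engine of the whole proof and has no counterpart in your plan.

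The endgame is also logically insufficient. Even if the subspace theorem handed you a fixed relation $P\alpha+Q\alpha'+R=0$, or more generally $\alpha'=g(\alpha)$ with $g\in \mathrm{GL}_2(\mathbb{Q})$, this does not imply that the continued fractions share a tail: Lemma \ref{xxx} (Serret) requires $g\in PSL(2,\mathbb{Z})$, and e.g.\ $\alpha'=2\alpha$ generically has an unrelated expansion. In the paper the same-tail conclusion is obtained \emph{combinatorially}, in the case where $|A_n|$ and $|A'_n|$ stay bounded along a subsequence (then $B_n$ sits at fixed positions in both words and $|B_n|\to\infty$). The subspace machinery serves only to exclude the unbounded cases, and doing so requires the auxiliary Lemma \ref{ssTte} (itself a subspace-theorem argument), the normalization of Convention \ref{convi1}, the claim that $M_n\to\infty$, two further descents from four variables to three to two, and a final contradiction with the unimodularity (\ref{for946}). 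None of this is supplied by, or replaceable with, ``unwinding the resulting identity''; as written, both the subspace step and the decoding step of your proposal fail.
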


 We say that two finite words $$A=a_1a_2\cdots a_n$$
 and $$B=b_1b_2\cdots b_n$$ are $cycle\  mirror\  symmetry$ if there
 exists an positive integer $i\leq n$ such that $$b_nb_{n-1}\cdots b_1=a_i\cdots a_na_1\cdots a_{i-1}.$$
 \begin{theorem} \label{main5}
Let assumptions be as above. If the two infinite words
$\textbf{a}= a_1a_2\cdots$  and $\textbf{a}'= a'_1a'_2\cdots$
satisfy (i'), (ii) and (iii)  of Condition \ref{c1},  then both
$\alpha$ and $\alpha'$ are quadratic irrationals. Moreover the
shortest periods of the $\textbf{a}$  and $\textbf{a}'$ are cycle
mirror symmetry.
\end{theorem}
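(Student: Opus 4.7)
The plan is to adapt the Schmidt Subspace theorem argument of Theorems \ref{main2} and \ref{main4} to the two-number mirror situation. Fix notation for the convergents produced by the prefixes: for each $n$ put $r_n=|A_n|$, $s_n=|A'_n|$, $t_n=|B_n|$. Let $R_n/S_n$, $R_n^\#/S_n^\#$ be the two consecutive convergents of $\alpha$ obtained by truncating at the end of $A_n$, and $P_n/Q_n$, $P_n^\#/Q_n^\#$ those obtained at the end of $A_nB_n$; let $R'_n/S'_n,\ldots,P'_n/Q'_n,\ldots$ be the primed analogues for $\alpha'$, using $A'_n$ and $A'_n\overline{B_n}$. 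Writing $\mathcal{B}_n$ for the product of the Jacobi matrices $\left(\begin{smallmatrix} b & 1\\ 1 & 0\end{smallmatrix}\right)$ indexed by the successive letters of $B_n$, the symmetry of each factor yields $\mathcal{B}_n^{T}=\overline{\mathcal{B}_n}$, where $\overline{\mathcal{B}_n}$ denotes the matrix of $\overline{B_n}$. The convergent recursions then read
\begin{equation*}
\begin{pmatrix}P_n & P_n^\#\\ Q_n & Q_n^\#\end{pmatrix}=\begin{pmatrix}R_n & R_n^\#\\ S_n & S_n^\#\end{pmatrix}\mathcal{B}_n,\qquad\begin{pmatrix}P'_n & P'^\#_n\\ Q'_n & Q'^\#_n\end{pmatrix}=\begin{pmatrix}R'_n & R'^\#_n\\ S'_n & S'^\#_n\end{pmatrix}\mathcal{B}_n^{T}.
\end{equation*}
Eliminating $\mathcal{B}_n$ yields an exact integer identity whose $(2,2)$-entry
\begin{equation*}
S'_n Q_n+S'^\#_n Q_n^\# \;=\; S_n Q'_n+S_n^\# Q'^\#_n
\end{equation*}
is the linear translation of hypothesis $(\mathrm{i}')$ and will serve as an additional relation at the Subspace-theorem step.

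Next I would form the integer vector $\mathbf{v}_n\in\mathbb{Z}^{N}$ built from the eight quantities $P_n,Q_n,P_n^\#,Q_n^\#,P'_n,Q'_n,P'^\#_n,Q'^\#_n$ (enriched if necessary with cross-products such as $Q_nQ'_n$, $Q_nP'_n$), and set up a system of $N$ linear forms with coefficients in $\mathbb{Q}(\alpha,\alpha')$: the classical approximation forms $\alpha X-Y$ and $\alpha'X-Y$ applied to the appropriate pairs of coordinates, the coordinate forms, and one form encoding the mirror identity of the preceding paragraph. Using the standing hypothesis that $\{(q_nq'_n)^{1/n}\}$ is bounded together with $r_n+s_n\leq Lt_n$ and the standard estimates $|\alpha Q_n-P_n|\ll 1/Q_n^\#$, $|\alpha Q_n^\#-P_n^\#|\ll 1/Q_n$ and their primed counterparts, I would verify that the product of the absolute values of the forms at $\mathbf{v}_n$ is bounded by $H(\mathbf{v}_n)^{-\varepsilon}$ for some fixed $\varepsilon>0$. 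This is the main technical obstacle: one must choose $N$, the variables and the precise forms so that the approximation quality of the twice-used block $B_n$ is fully captured (the mirror identity is what supplies the extra decay, playing the same role that the reflected block plays in the proof of Theorem \ref{main2}), and so that the integer coefficients of the mirror form, which vary with $n$, are absorbed into the height of $\mathbf{v}_n$ rather than into the forms themselves.

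The Schmidt Subspace theorem then places infinitely many $\mathbf{v}_n$ on a common proper $\mathbb{Q}$-rational hyperplane. Unfolding the resulting relation, passing to a subsequence along which $r_n/t_n$ and $s_n/t_n$ converge, and playing the contribution of $\alpha$ off against that of $\alpha'$, one extracts a nontrivial algebraic relation of bidegree at most $1$ in $(\alpha,\alpha')$; varying the initial prefix $A_n$ (resp.\ $A'_n$) while preserving the mirror structure then yields, independently, quadratic relations $\alpha^2\in\mathbb{Q}+\mathbb{Q}\alpha$ and $\alpha'^2\in\mathbb{Q}+\mathbb{Q}\alpha'$, so both $\alpha$ and $\alpha'$ are quadratic irrationals. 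Once this is known, their continued fractions are eventually periodic with shortest periods, say, $b_1\cdots b_m$ and $b'_1\cdots b'_{m'}$. For $n$ large the blocks $B_n$ and $\overline{B_n}$ lie entirely inside the purely periodic tails of $\mathbf{a}$ and $\mathbf{a}'$ respectively, and since $|B_n|\to\infty$ a direct combinatorial argument on purely periodic infinite words---if $u,v$ are purely periodic of shortest periods $U,V$ and arbitrarily long mirror-aligned sub-blocks of $u$ coincide with sub-blocks of $v$, then $|U|=|V|$ and $\overline{U}$ is a cyclic shift of $V$---delivers the cycle mirror symmetry of the shortest periods, completing the proof.
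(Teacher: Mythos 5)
There is a genuine gap, and you flag it yourself: the entire argument hinges on choosing the integer vector and a \emph{fixed} system of linearly independent linear forms whose product of values is $\ll H(\mathbf{v}_n)^{-\varepsilon}$, and your proposal leaves exactly this choice open ("the main technical obstacle"). Your plan of using the eight raw convergents plus "one form encoding the mirror identity" does not work as stated, because that mirror relation has integer coefficients depending on $n$, so it is not a fixed linear form and the Subspace theorem does not apply to it; it cannot simply be "absorbed into the height." The paper's resolution is precisely to change variables: the unknowns are the four \emph{bilinear} quantities $(a_n,b_n,c_n,d_n)$, the entries of the product matrix obtained by eliminating $\textsl{M}(B_n)$ (e.g.\ $a_n=q_{k_n}q'_{l_n+m_n}+q_{k_n-1}q'_{l_n+m_n-1}$), against which the four fixed forms $\alpha\alpha'X_1-\alpha X_2-\alpha'X_3+X_4$, $\alpha'X_1-X_2$, $\alpha X_1-X_3$, $X_1$ are evaluated; the mirror identity is then used only to rewrite $L_3(a_n,b_n,c_n,d_n)$ via the \emph{other} side of the matrix equation, which is what produces the decisive decay $\ll q_{k_n+m_n}^{-1}q'_{l_n}$. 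Your opening observation $\mathcal{B}_n^{T}=\overline{\mathcal{B}_n}$ and the resulting $(2,2)$-entry identity are correct and coincide with the paper's starting point, but without the change of variables the Subspace step does not go through.

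The endgame is also not a proof as written. From a single $\mathbb{Q}$-linear relation among the $(a_n,b_n,c_n,d_n)$ one cannot "vary the initial prefix while preserving the mirror structure" --- the sequences $A_n,A'_n,B_n$ are given data, not parameters --- and a bidegree-$(1,1)$ relation in $(\alpha,\alpha')$ does not by itself yield $\alpha^2\in\mathbb{Q}+\mathbb{Q}\alpha$. The paper instead splits into three cases according to whether $k_n=|A_n|$ and $l_n=|A'_n|$ stay bounded: when both are bounded the relation collapses to a four-term linear relation among consecutive convergents of a single number, and Lemma \ref{ssTte} (itself two further applications of the Subspace theorem) forces both numbers to be quadratic; when one or both tend to infinity, one must rule the case out for degree $>2$ by the iterated elimination of Case 3 of Theorem \ref{main4} (Claims \ref{5645} and \ref{56451}, successive reductions from four to three to two variables, contradicting the unimodularity of the product matrix). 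None of this machinery appears in your proposal. Your final combinatorial step on purely periodic words is fine in spirit and matches the paper's (terse) conclusion, but it only becomes available after quadraticity has actually been established.
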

\begin{remark}\label{remar}
The proofs below   show that Theorems \ref{main4} and \ref{main5}
are still valid if we replace   (iii)  of Condition \ref{c1} and
the boundness of $\{(q_{n}q'_{n})^{1/n}\}_{n\geq0}$ with the
following condition
 \begin{condition} \label{cfggh1}
There exist positive numbers $\delta$ and $L$ such that
$$(q_{k_{n}}q'_{l_{n}})^{1+\delta}< L q_{k_{n}+m_n}q'_{l_{n}+m_n}$$
for each $n\geq1$, where $k_n=|A_n|$, $l_n=|A'_n|$, and
$m_n=|B_n|$.
\end{condition}
\end{remark}

Applying Theorems \ref{main4} and \ref{main5} to the case
$\alpha=\alpha'$, we recover Theorems \ref{main1} and \ref{main2}
immediately.

Another main result of this paper is Diophantine approximation by
algebraic numbers. The classical theory of Diophantine
approximation of reals by rationals has the geometric
interpretation of approximating elements of the boundary of the
hyperbolic plane by the orbit of infinity under the modular group.
For example, the celebrated Roth's Theorem \cite{ro} can be stated
as:
 \begin{theorem} [Roth]\label{main99w} Let $\epsilon $ be a positive number
and let $\xi$ be an irrational real number. If there exist
infinitely many  \[\begin{matrix}A=\begin{pmatrix}a&b\\
c&d\end{pmatrix}\in PSL(2,\mathbb{Z})
\\
\end{matrix}\] such that
$$|\xi-\tfrac{a\infty+b}{c\infty+d}|<\|A\|^{-2-\epsilon},$$
then $\xi $ is transcendental.
\end{theorem}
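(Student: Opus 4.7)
The plan is to recognize the statement as a geometric repackaging of the classical Roth's theorem, namely that for every algebraic irrational $\xi$ and every $\epsilon>0$ the inequality $|\xi-p/q|<q^{-2-\epsilon}$ has only finitely many solutions in rationals $p/q$ written in lowest terms with $q>0$. All of the work lies in translating between matrices in $PSL(2,\mathbb{Z})$ and reduced fractions; the transcendence conclusion is then transported by contraposition from classical Roth, which I would use as a black box via \cite{ro}.

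First I would unpack the boundary action. For $A=\bigl(\begin{smallmatrix}a&b\\c&d\end{smallmatrix}\bigr)\in PSL(2,\mathbb{Z})$ with $c\neq0$ one has $A\cdot\infty=a/c$, and the determinant relation $ad-bc=1$ already forces $\gcd(a,c)=1$. Conversely, every reduced fraction $p/q$ with $q>0$ is realised this way, and the set of matrices $A$ with $A\cdot\infty=p/q$ forms a single right coset of the stabiliser $\mathrm{Stab}_{PSL(2,\mathbb{Z})}(\infty)=\{\bigl(\begin{smallmatrix}1&n\\0&1\end{smallmatrix}\bigr):n\in\mathbb{Z}\}$, which is infinite. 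Consequently $\|A\|$ is unbounded above along each such coset.

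Second, I would fix $\|A\|:=\max(|a|,|b|,|c|,|d|)$; any other reasonable choice of norm is comparable to this one, and replacing one by another only costs an arbitrarily small amount of $\epsilon$. The single trivial inequality that matters is $\|A\|\geq|c|=q$, so that $\|A\|^{-2-\epsilon}\leq q^{-2-\epsilon}$ whenever $A\cdot\infty=p/q$ is written in lowest terms with $q>0$.

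Now assume for contraposition that $\xi$ is algebraic irrational yet infinitely many $A_n\in PSL(2,\mathbb{Z})$ satisfy $|\xi-A_n\cdot\infty|<\|A_n\|^{-2-\epsilon}$. Set $p_n/q_n:=A_n\cdot\infty$ in lowest terms with $q_n>0$. If the rationals $p_n/q_n$ took only finitely many values, some single $p/q$ would occur for infinitely many $A_n$, and along that coset $\|A_n\|\to\infty$; but then $0<|\xi-p/q|<\|A_n\|^{-2-\epsilon}\to0$, absurd. Hence the sequence $p_n/q_n$ takes infinitely many distinct values, and the inequalities $|\xi-p_n/q_n|<\|A_n\|^{-2-\epsilon}\leq q_n^{-2-\epsilon}$ contradict classical Roth. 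The only nontrivial ingredient is Roth itself; the matrix-to-rational dictionary is entirely elementary, and the main obstacle is limited to the small amount of bookkeeping above.
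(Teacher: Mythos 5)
Your proposal is correct and matches the paper's intent exactly: the paper states this result as a geometric restatement of the classical Roth theorem (citing \cite{ro}) and offers no separate proof, and your matrix-to-reduced-fraction dictionary together with the single inequality $\|A\|\geq |c|$ (which holds for the paper's norm $\max(|c|,|d|)$ just as for your $\max$ of all four entries) is precisely the routine translation required. The only cosmetic slip is the parenthetical claim that all reasonable norms on $PSL(2,\mathbb{Z})$ are comparable, which is false in general but never actually used, since your argument only invokes $\|A\|\geq|c|$.
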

In this paper we will show that an analogy to Roth's theorem holds
when the point of infinity is replaced by an irrational real
algebraic number. Let $ PSL(2,\mathbb{Z})$ be the projective
linear group
 of $2\times2$ matrices with integer coefficients and unit
determinant. For any
\[\begin{matrix}A=\begin{pmatrix}a&b\\
c&d\end{pmatrix}\in PSL(2,\mathbb{Z}),
\\
\end{matrix}\]
Set $\|A\|=\max(|c|,|d|)$. For an irrational real number
$\alpha$, an element \[\begin{matrix}A=\begin{pmatrix}a&b\\
c&d\end{pmatrix}\in PSL(2,\mathbb{Z}),
\\
\end{matrix}\] acts on $\alpha$ by
 \[\begin{matrix}A=\begin{pmatrix}a&b\\
c&d\end{pmatrix}\alpha=\frac{a\alpha+b}{c\alpha+d}.
\\
\end{matrix}\]

Let $\Theta_{\alpha}=PSL(2,\mathbb{Z})\alpha$ be the orbit of
$\alpha$ for the action of $ PSL(2,\mathbb{Z})$. When $\alpha$ is
of degree higher than 2,  for any $\beta\in\Theta_{\alpha}$, set
$$\|\beta\|=\|A\|,$$ where $A$ is the
unique element of $ PSL(2,\mathbb{Z})$ such that $\beta=A\alpha$.

\begin{remark}\label{rem}
The definition of the norm $\|\cdot\|$ above depends upon
$\alpha$. But it is easy to see that when
$\alpha\neq\alpha'\in\Theta_{\alpha}$, there exist two positive
constants $c_1$ and $c_2$ such that the corresponding norms
$\|\cdot\|_{\alpha}$ and $\|\cdot\|_{\alpha'}$ satisfy
$$c_1\|\cdot\|_{\alpha'}<\|\cdot\|_{\alpha}<c_2\|\cdot\|_{\alpha'}.$$
\end{remark}

 \begin{theorem} \label{main99}
Let $\alpha$ be an irrational real algebraic number  of degree
$d>2$. Let $\epsilon $ be a positive number and let $\xi$ be an
irrational real number not in $\Theta_{\alpha}$. If there exist
infinitely many $\beta\in\Theta_{\alpha}$ such
$$|\xi-\beta|<\|\beta\|^{-2-\epsilon},$$ then $\xi $ is
transcendental.
\end{theorem}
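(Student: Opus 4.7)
The plan is to argue by contradiction: assume $\xi$ is algebraic, irrational, and $\xi\notin\Theta_{\alpha}$, and feed the hypothesized approximations into Theorem \ref{main4} (in the sharper form of Remark \ref{remar}) to conclude that the continued fractions of $\xi$ and $\alpha$ share the same tail. By the classical correspondence between continued-fraction tails and $PSL(2,\mathbb{Z})$-equivalence, this forces $\xi\in\Theta_{\alpha}$, contradicting the hypothesis.

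The first task is to convert each approximation $|\xi-\beta_n|<\|A_n\|^{-2-\epsilon}$ (where $\beta_n=A_n\alpha$ with $A_n\in PSL(2,\mathbb{Z})$ and $\|A_n\|\to\infty$) into a long common sub-word of the expansions of $\xi$ and $\alpha$. By the Lagrange--Serret theory the expansion of $\beta_n$ takes the form
\[
\beta_n=[b_0^{(n)};b_1^{(n)},\ldots,b_{k_n-1}^{(n)},a_{l_n},a_{l_n+1},\ldots],
\]
where the integers $k_n,l_n\geq 0$ are encoded in the generator factorization $A_n=M(b_0^{(n)})\cdots M(b_{k_n-1}^{(n)})\bigl[M(a_0)\cdots M(a_{l_n-1})\bigr]^{-1}$ with $M(a)=\bigl(\begin{smallmatrix}a&1\\1&0\end{smallmatrix}\bigr)$. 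A Legendre-type estimate, $|\xi-\tilde p_j/\tilde q_j|\leq|\xi-\beta_n|+|\beta_n-\tilde p_j/\tilde q_j|$, shows that every convergent $\tilde p_j/\tilde q_j$ of $\beta_n$ with $\tilde q_j\lesssim\|A_n\|^{1+\epsilon/3}$ is also a convergent of $\xi$. Hence $\xi$ and $\beta_n$ agree in their first $M_n$ partial quotients, where $M_n$ is the largest index with $\tilde q_{M_n}\leq\|A_n\|^{1+\epsilon/3}$, and $M_n>k_n$ for $n$ large. Consequently the partial quotients of $\xi$ in positions $k_n,\ldots,M_n-1$ coincide with those of $\alpha$ in positions $l_n,\ldots,l_n+(M_n-k_n-1)$; this produces prefixes $W_n$, $W_n'$ and a common sub-word $V_n$ of lengths $k_n$, $l_n$, and $m_n=M_n-k_n$ respectively, and $m_n\to\infty$ establishes conditions (i) and (ii) of Condition \ref{c1}.

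The remaining technical step is to check Condition \ref{cfggh1} of Remark \ref{remar}. A direct expansion of the matrix factorization gives the two-sided norm estimate $\|A_n\|\asymp Q_{k_n-1}^{(n)}\,q_{l_n-1}$, where $Q^{(n)}$ and $q$ denote the convergent denominators of the prefix $[b_0^{(n)};\ldots,b_{k_n-1}^{(n)}]$ and of $\alpha$, respectively. Combined with $\tilde q_{M_n}\asymp\|A_n\|^{1+\epsilon/3}$ and the standard product formula expressing $\tilde q_{k_n+j}$ through $Q^{(n)}$-denominators and the shifted tail denominators $q_j^{(l_n)}$ of $\alpha^{(l_n)}$, one obtains
\[
q_{k_n}^{\xi}\,q_{l_n}^{\alpha}\lesssim\|A_n\|^{1+o(1)},\qquad q_{k_n+m_n}^{\xi}\,q_{l_n+m_n}^{\alpha}\gtrsim\|A_n\|^{1+\epsilon/2},
\]
so Condition \ref{cfggh1} holds with any $\delta<\epsilon/3$. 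Theorem \ref{main4} via Remark \ref{remar} then delivers a common tail of $\xi$ and $\alpha$, forcing $\xi\in\Theta_{\alpha}$ --- the desired contradiction. The main obstacle is the two-sided norm bound $\|A_n\|\asymp Q_{k_n-1}^{(n)}q_{l_n-1}$: to exclude exceptional cancellations in the entries of $A_n$ one needs the uniqueness of the prefix/tail decomposition, which in turn relies on $\alpha$ being non-quadratic (so that its $PSL(2,\mathbb{Z})$-stabilizer is trivial).
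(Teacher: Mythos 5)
Your overall strategy coincides with the paper's: decompose each $\beta_n$ via Lagrange--Serret into a prefix followed by a tail of $\alpha$, transfer the inequality $|\xi-\beta_n|<\|\beta_n\|^{-2-\epsilon}$ into a long block of partial quotients common to the expansions of $\xi$ and $\alpha$, verify Condition \ref{cfggh1}, and invoke Theorem \ref{main4} with Remark \ref{remar} to force $\xi\in\Theta_{\alpha}$. However, the step you yourself flag as ``the main obstacle'' --- the two-sided bound $\|A_n\|\asymp Q^{(n)}_{k_n-1}q_{l_n-1}$ --- is precisely where the substance of the proof lies, and the justification you sketch (uniqueness of the prefix/tail decomposition, triviality of the stabilizer of a non-quadratic $\alpha$) cannot deliver it. Triviality of the stabilizer is qualitative; what is needed is a quantitative lower bound on the matrix entry $q^{(n)}_{l_n}q_{k_n-1}-q^{(n)}_{l_n-1}q_{k_n}$, i.e.\ a proof that no near-cancellation occurs. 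The paper (Claim \ref{cdrdeem1111}) establishes only the one-sided bound $\|\beta_{n}\|\geq(\overline{q^{(n)}_{l_n-1}}q_{k_{n}})^{1-\epsilon_{1}}$, and even this costs an $\epsilon_{1}$ in the exponent: one writes $\tfrac{q^{(n)}_{l_n}q_{k_{n}-1}}{q^{(n)}_{l_n-1}q_{k_{n}}}$ as a quotient of the \emph{reversed} continued fractions $[a^{(n)}_{l_n};\ldots,a^{(n)}_1]$ and $[a_{k_n};\ldots,a_1]$ via (\ref{ss12}), bounds their distance from below by the separation Lemma \ref{ss091}, and is then left with a factor $1/(a_{k_n-1}a_{k_n-2})$; since the partial quotients of an algebraic number of degree $>2$ are not known to be bounded, this factor can only be controlled through Lemma \ref{ss09}, that is, through Roth's theorem. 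Your $\asymp$ with absolute constants is in fact false in both directions once the relevant partial quotients are unbounded, and without a substitute for this estimate neither $m_n\to\infty$ nor Condition \ref{cfggh1} follows, so the reduction to Theorem \ref{main4} does not go through as written.

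A secondary, fixable point: Legendre's criterion does not show that \emph{every} convergent $\tilde p_j/\tilde q_j$ of $\beta_n$ with small denominator is a convergent of $\xi$, since $|\beta_n-\tilde p_j/\tilde q_j|$ need not lie below $1/(2\tilde q_j^{\,2})$ when $\tilde q_{j+1}<2\tilde q_j$. The clean way to propagate the agreement of partial quotients is to apply the separation estimate of Lemma \ref{ss091} at the first index of disagreement, as the paper does in Claim \ref{cdrdeem11}; this also yields the lower bound on $q'_{l_n+m_n}$ that feeds into Condition \ref{cfggh1}.
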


Similar result holds for quadratic  real number. Let $\alpha$ be a
fixed quadratic irrational  real number, and let $x^{\sigma}$ be
the conjugate of any quadratic number $x$.  For any
$\beta=\frac{a\alpha+b}{c\alpha+d}\in\Theta_{\alpha}$, set
$$\|\beta\|=|
\tfrac{1}{\beta-\beta^{\sigma}}|=|\tfrac{(c\alpha+d)(c\alpha^{\sigma}+d)}{\alpha-\alpha^{\sigma}}|.$$
The proof of Theorem \ref{main99} also implies:
 \begin{theorem} \label{main990}
Let $\epsilon $ be a positive number and let $\xi$ be an
irrational number not in $\Theta_{\alpha}$. If there exist
infinitely many $\beta\in\Theta_{\alpha}$ such
$$|\xi-\beta|<\|\beta\|^{-1-\epsilon},$$ then $\xi $ is
transcendental.
\end{theorem}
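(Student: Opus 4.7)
The plan is to reduce Theorem \ref{main990} to Theorem \ref{main4} applied to the pair $(\xi,\alpha)$; the argument is uniform with the proof of Theorem \ref{main99}, the only difference being bookkeeping that stems from the different normalization of $\|\cdot\|$. The starting point is the elementary comparability
$$\|\beta\|=\left|\frac{(c\alpha+d)(c\alpha^{\sigma}+d)}{\alpha-\alpha^{\sigma}}\right|\asymp\|M\|^{2},$$
where $M=\bigl(\begin{smallmatrix}a&b\\c&d\end{smallmatrix}\bigr)$ and $\beta=M\alpha$, so that the hypothesis $|\xi-\beta_{n}|<\|\beta_{n}\|^{-1-\epsilon}$ rewrites as $|\xi-\beta_{n}|<C\|M_{n}\|^{-2-2\epsilon}$. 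Assume for contradiction that $\xi$ is algebraic, and write $\textbf{a}=a_{1}a_{2}\cdots$, $\textbf{b}^{(n)}=b_{1}^{(n)}b_{2}^{(n)}\cdots$, $\textbf{c}=c_{1}c_{2}\cdots$ for the continued fraction expansions of $\xi$, $\beta_{n}=M_{n}\alpha$, $\alpha$ respectively.

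The core step is a continued-fraction translation. Let $\rho>1$ be the exponential growth rate of $q_{n}(\alpha)$, well-defined since $\alpha$ is quadratic. Legendre's criterion applied to the bound on $|\xi-\beta_{n}|$ forces $a_{i}=b_{i}^{(n)}$ for $1\le i\le k_{n}$ with $q_{k_{n}}(\beta_{n})\gtrsim\|M_{n}\|^{1+\epsilon}$; combined with the classical estimate $q_{k}(\beta_{n})\asymp\|M_{n}\|\,\rho^{\,k-\ell_{n}}$ for $k\ge\ell_{n}$, where $\ell_{n}=O(\log\|M_{n}\|)$ is the signed continued-fraction length of $M_{n}$ as an element of $PSL(2,\mathbb{Z})$, this yields $k_{n}-\ell_{n}\gtrsim(\epsilon/\log\rho)\log\|M_{n}\|$. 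Serret's theorem then provides offsets $s_{n},t_{n}=O(\ell_{n})$ with $b_{s_{n}+j}^{(n)}=c_{t_{n}+j}$ for every $j\ge 1$, so $a_{s_{n}+j}=c_{t_{n}+j}$ for $1\le j\le k_{n}-s_{n}$, and the common length $k_{n}-s_{n}$ tends to infinity.

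Setting $A_{n}:=a_{1}\cdots a_{s_{n}}$, $A'_{n}:=c_{1}\cdots c_{t_{n}}$, and $B_{n}:=a_{s_{n}+1}\cdots a_{k_{n}}$, the words $A_{n}B_{n}$ and $A'_{n}B_{n}$ are prefixes of $\textbf{a}$ and $\textbf{c}$, verifying (i) and (ii) of Condition \ref{c1}. Since the boundedness of $\{(q_{n}(\xi))^{1/n}\}$ is not automatic for a general algebraic $\xi$, the hypotheses of Theorem \ref{main4} cannot be checked directly; I therefore appeal to Remark \ref{remar} and verify Condition \ref{cfggh1}. With $q_{s_{n}}(\xi)\lesssim\|M_{n}\|$, $q_{k_{n}}(\xi)\gtrsim\|M_{n}\|^{1+\epsilon}$, $q'_{t_{n}}\asymp\rho^{t_{n}}$ and $q'_{t_{n}+|B_{n}|}\gtrsim\rho^{t_{n}}\|M_{n}\|^{\epsilon}$, the required inequality
$$\bigl(q_{s_{n}}(\xi)\,q'_{t_{n}}\bigr)^{1+\delta}<L\,q_{k_{n}}(\xi)\,q'_{t_{n}+|B_{n}|}$$
reduces to $\|M_{n}\|^{\delta-2\epsilon}\rho^{\delta t_{n}}<L$, which holds for $\delta>0$ chosen sufficiently small (using $t_{n}=O(\log\|M_{n}\|)$). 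Theorem \ref{main4} therefore forces $\textbf{a}$ and $\textbf{c}$ to share a common tail; by Serret's theorem $\xi$ and $\alpha$ are $PSL(2,\mathbb{Z})$-equivalent, so $\xi\in\Theta_{\alpha}$, contradicting the hypothesis.

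The main obstacle is the uniform estimate $s_{n},t_{n}=O(\log\|M_{n}\|)$ together with the companion bound $q_{s_{n}}(\xi)\lesssim\|M_{n}\|$ used in the verification of Condition \ref{cfggh1}; both stem from the explicit description, going back to Serret, of the continued fraction of $M_{n}\alpha$ obtained by sliding the Euclidean expansion of the matrix $M_{n}$ into the continued fraction of $\alpha$. Once these estimates are in hand, the remaining inequalities are routine bookkeeping.
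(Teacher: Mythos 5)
Your overall strategy coincides with the paper's: use the good approximations $\beta_n$ to force a long common block between the continued fractions of $\xi$ and $\alpha$, verify Condition \ref{cfggh1}, and invoke Theorem \ref{main4} together with Remark \ref{remar} to obtain a common tail, hence $\xi\in\Theta_{\alpha}$, a contradiction. The gap is at your declared ``starting point'', the comparability $\|\beta\|\asymp\|M\|^{2}$. As stated this is false: $\|\beta\|=1/|\beta-\beta^{\sigma}|$ is intrinsic to $\beta$, whereas $\|M\|$ can be made arbitrarily large within the coset $M\cdot\mathrm{Stab}(\alpha)$ (the stabilizer of a quadratic irrational in $PSL(2,\mathbb{Z})$ is infinite). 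Concretely, if the bottom row of $M$ is $(q_m,-p_m)$ with $p_m/q_m$ a convergent of $\alpha$, then $|c\alpha+d|\ll q_m^{-1}$, so $|(c\alpha+d)(c\alpha^{\sigma}+d)|$ stays bounded while $\|M\|=q_m\to\infty$. Since every subsequent estimate in your argument (the rewriting $|\xi-\beta_n|<C\|M_n\|^{-2-2\epsilon}$, the bound $q_{k_n}(\beta_n)\gtrsim\|M_n\|^{1+\epsilon}$, and the verification of Condition \ref{cfggh1}) is calibrated against $\|M_n\|$, the whole chain rests on this unproved and, for a general representative, false step. Even for a norm-minimizing representative the claim needs an argument via the automorph group, and you give none.

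The correct substitute is precisely the one genuinely new computation in the paper's proof of this theorem. Normalizing $\alpha=[0,\overline{a_1,\ldots,a_k}]$ and $\beta_n=[0,a^{(n)}_1,\ldots,a^{(n)}_{l_n},\overline{a_1,\ldots,a_k}]$ with $a^{(n)}_{l_n}\neq a_k$, one takes the representative read off from this expansion and proves $\|\beta_n\|\gg q^{(n)}_{l_n-1}q^{(n)}_{l_n}$; the nontrivial point is the lower bound $|q^{(n)}_{l_n-1}\alpha^{\sigma}+q^{(n)}_{l_n}|\gg q^{(n)}_{l_n-1}$ for the Galois-conjugate factor, which the paper gets from $-\alpha^{\sigma}=[a_k;\overline{a_{k-1},\ldots,a_1,a_k}]$, identity (\ref{ss12}) expressing $q^{(n)}_{l_n}/q^{(n)}_{l_n-1}$ as the mirror continued fraction $[a^{(n)}_{l_n};a^{(n)}_{l_n-1},\ldots,a^{(n)}_1]$, and Lemma \ref{ss091} combined with $a^{(n)}_{l_n}\neq a_k$, so that the two expansions already disagree in the first partial quotient. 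You assume all of this away as ``elementary comparability'', and you additionally leave unproved the offset estimates $s_n,t_n=O(\log\|M_n\|)$ and $q_{s_n}(\xi)\lesssim\|M_n\|$ that you yourself flag as ``the main obstacle''. In the paper these are not needed at all: after passing to a subsequence with $k_n=0$ the tail of $\beta_n$ is literally the period of $\alpha$, so no Serret-type bookkeeping of offsets arises.
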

This is essentially Theorem 4.4 from \cite{bu1}.

This paper is structured as follows: In Section 2, we give
preliminaries that will be used throughout this paper. In Sections
3 and 4, we give proofs of Theorems \ref{main4} and \ref{main5}.
In Section 5,  we give proofs of
 Theorems \ref{main99} and \ref{main990}.

 \section{preliminaries}
In this paper, we write $$[a_0;a_1,a_2,\cdots,a_n]$$ for the
finite continued fraction expansion
$$a_0+\frac{1}{a_1+\frac{1}{a_2+\cdots+\frac{1}{a_n}}},$$ and write $$[a_0;a_1,a_2,\cdots,a_n,\cdots]$$ for the
infinite continued fraction expansion
$$a_0+\frac{1}{a_1+\frac{1}{a_2+\cdots+\frac{1}{a_n+\cdots}}},$$
where $a_1,a_2,\cdots, $ are positive integers and $a_0$ is an
integer. An eventually periodic continued fraction is written as
$$[a_0;a_1,a_2,\cdots,a_{k-1},\overline{a_k,\cdots,a_{k+m-1}}],$$
where $a_0;a_1,a_2,\cdots,a_{k-1}$ is the preperiod and
$a_k,\cdots,a_{k+m-1}$ is the shortest period.

The sequence of convergents of
$$a=[a_0;a_1,a_2,\cdots,a_n,\cdots]$$ is defined by
$$p_{-2}=0, \   p_{-1}=1, \   p_{n}=a_np_{n-1}+p_{n-2}  \   (n\geq 0),$$
$$q_{-2}=1, \   q_{-1}=0, \   q_{n}=a_nq_{n-1}+q_{n-2}  \   (n\geq 0).$$
We have (cf.\cite{hw})
 \begin{lemma}\label{ss}
$$[a_0;a_1,a_2,\cdots,a_n]=\frac{p_{n}}{q_{n}},$$
$$|a-\frac{p_{n}}{q_{n}}|<\frac{1}{q_{n}q_{n+1}},$$ and
$$q_{m+n}\geq 2^{\tfrac{m-1}{2}}q_{n},$$ for $m,n\geq 1.$
   \end{lemma}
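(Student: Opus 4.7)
The plan is to verify each of the three claims by straightforward induction on $n$ (or $m$), exploiting only the recurrences
$$p_n = a_n p_{n-1} + p_{n-2}, \qquad q_n = a_n q_{n-1} + q_{n-2},$$
together with the hypothesis that $a_k \geq 1$ for $k \geq 1$. None of the three parts should be hard; the only thing that needs care is getting the right powers in the third inequality.

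For the first identity, I would induct on $n$. The base cases $n=0,1$ are direct. For the inductive step, I would use the key observation that
$$[a_0;a_1,\ldots,a_{n-1},a_n] = [a_0;a_1,\ldots,a_{n-2},a_{n-1}+\tfrac{1}{a_n}],$$
then apply the induction hypothesis to the right-hand side (formally, with $a_{n-1}$ replaced by $a_{n-1}+1/a_n$) and simplify using the recurrences for $p_{n-1}, q_{n-1}$. This also incidentally yields the useful determinant identity $p_n q_{n-1} - p_{n-1} q_n = (-1)^{n-1}$, which I would establish in passing by the same induction.

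For the second inequality, I would let $\alpha_{n+1}=[a_{n+1};a_{n+2},\ldots]$ denote the $(n+1)$st complete quotient, so that $a = [a_0;a_1,\ldots,a_n,\alpha_{n+1}]$. Applying the first part (extended in the obvious way to allow a real final entry) gives
$$a = \frac{p_n \alpha_{n+1} + p_{n-1}}{q_n \alpha_{n+1} + q_{n-1}}.$$
Subtracting $p_n/q_n$ and using $p_n q_{n-1} - p_{n-1} q_n = (-1)^{n-1}$ yields
$$\left|a - \frac{p_n}{q_n}\right| = \frac{1}{q_n(q_n \alpha_{n+1} + q_{n-1})}.$$
Finally, because $\alpha_{n+1} > a_{n+1}$, we have $q_n\alpha_{n+1}+q_{n-1} > a_{n+1} q_n + q_{n-1} = q_{n+1}$, giving the claimed strict bound.

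For the third inequality I would first prove the two-step estimate $q_{k+2} \geq 2 q_k$ for $k \geq 0$: indeed $q_{k+2}=a_{k+2}q_{k+1}+q_k\geq q_{k+1}+q_k\geq 2q_k$ since the sequence $\{q_j\}$ is nondecreasing for $j\geq 0$. Iterating, $q_{n+2s}\geq 2^s q_n$, and if $m$ is odd we use the extra step $q_{n+2s+1}\geq q_{n+2s}$. In both parities this gives $q_{n+m} \geq 2^{\lfloor m/2\rfloor} q_n \geq 2^{(m-1)/2}q_n$, as required. The only mild subtlety — and the one place I would be careful — is checking the monotonicity $q_{k+1}\geq q_k$ for all $k\geq 0$, which follows because $q_0 = 1, q_1 = a_1\geq 1$, and the recurrence with $a_k\geq 1$ preserves this.
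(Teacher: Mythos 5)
Your proof is correct, and since the paper gives no proof of this lemma (it simply cites Hardy--Wright), your arguments are exactly the standard ones from that reference: the inductive derivation of $p_n/q_n$, the complete-quotient computation for the approximation bound, and the two-step doubling $q_{k+2}\geq 2q_k$ for the growth estimate. Nothing to correct.
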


For any finite nonempty word of integers $B=b_0b_1b_2\cdots b_n $,
set

\[\begin{matrix}
\textsl{M}(B)=\begin{pmatrix}b_{0}&1\\1&0\end{pmatrix}\begin{pmatrix}b_{1}&1\\1&0\end{pmatrix}\cdots\begin{pmatrix}b_{n}&1\\1&0\end{pmatrix}.\\
\end{matrix}\]

Then it is well-known that (cf.\cite{fr})
\[\begin{matrix}\textsl{M}(a_0,a_1,a_2,\cdots,a_n)=\begin{pmatrix}p_{n}&p_{n-1}\\q_{n}&q_{n-1}\end{pmatrix},\\
\end{matrix}\]

Hence
\begin{equation}\label{ss1}p_{n}q_{n-1}-p_{n-1}q_{n}=(-1)^{n+1}.\end{equation}

When $a_0=0$,
 by taking the conjugation, we get \[\begin{matrix}\textsl{M}(0,a_n,a_{n-1},\cdots,a_1)=\begin{pmatrix}q_{n-1}&p_{n-1}\\q_{n}&p_{n}\end{pmatrix}.\\
\end{matrix}\]  Hence
\begin{equation}\label{ss12}\frac{q_{n}}{q_{n-1}}=[a_n;a_{n-1},\cdots,a_1].\end{equation}

   A simple proof by induction shows that:

    \begin{lemma}\label{ss3}
    For two finite words of positive integers  $$a_0a_1a_2\cdots a_m, $$and$$  b_0b_1b_2\cdots
    b_n,
    $$
$$\textsl{M}(a_0,a_1,a_2,\cdots,a_m)=\textsl{M}(b_0,b_1,b_2,\cdots,b_n)$$ implies $$a_0a_1a_2\cdots a_m=b_0b_1b_2\cdots
b_n.
    $$
   \end{lemma}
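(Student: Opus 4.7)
The plan is to proceed by strong induction on $m+n$. The base case $m=n=0$ is immediate: $M(a_0)=M(b_0)$ forces $a_0=b_0$ by comparison of the $(1,1)$ entries. For the inductive step I would first rule out the mixed situation in which one of the tuples has length one while the other does not. The $(2,2)$ entry of $M(a_0)$ equals $0$, whereas the $(2,2)$ entry of $M(b_0,\ldots,b_n)$ equals $q_{n-1}$, which is strictly positive as soon as $n\geq 1$ (the recursion gives $q_0=1$ and the $q_k$ are nondecreasing thereafter). So from here on I may assume $m,n\geq 1$.

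The heart of the argument is to extract $a_0=b_0$ directly from the matrix identity. Reading off the first columns gives
\[
x := [a_0;a_1,\ldots,a_m] = [b_0;b_1,\ldots,b_n],
\]
which rewrites as $x=a_0+1/u=b_0+1/v$, where $u:=[a_1;a_2,\ldots,a_m]$ and $v:=[b_1;b_2,\ldots,b_n]$. Because every partial quotient is a positive integer, $u\geq 1$ with equality precisely in the trivial situation $m=1$, $a_1=1$; the analogous statement holds for $v$. A short case analysis then forces $a_0=b_0$: if $u,v>1$, both equal $\lfloor x\rfloor$; if $u=v=1$, one has the integer identity $a_0+1=b_0+1$; and a mixed case such as $u=1$, $v>1$ is impossible, since then $x$ would have to equal the integer $a_0+1$ while also lying strictly between the consecutive integers $b_0$ and $b_0+1$.

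Once $a_0=b_0$ is secured, I would left-multiply both sides by the inverse
\[
\begin{pmatrix}a_0&1\\1&0\end{pmatrix}^{-1}=\begin{pmatrix}0&1\\1&-a_0\end{pmatrix}
\]
to obtain $M(a_1,\ldots,a_m)=M(b_1,\ldots,b_n)$; the induction hypothesis then applies, as the sum of indices has dropped from $m+n$ to $(m-1)+(n-1)$. This yields $m=n$ together with $a_i=b_i$ for every $i$. The only delicate point, and the main obstacle I foresee, is the bookkeeping around the edge case $u=1$ or $v=1$; but since $[a_1;a_2,\ldots,a_m]=1$ forces $m=1$ and $a_1=1$, the case analysis closes cleanly, and no further degeneracies interfere with the induction.
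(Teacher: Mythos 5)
Your proof is correct, and it follows the same route the paper indicates (the paper offers no details beyond "a simple proof by induction"): you induct on the total length, recover the leading letter from the matrix, and peel it off by left-multiplying by the inverse of $\begin{pmatrix}a_0&1\\1&0\end{pmatrix}$. The edge cases you flag — distinguishing a length-one word via the vanishing $(2,2)$ entry, and the degenerate case $[a_1;\ldots,a_m]=1$ — are exactly the points that need care, and you handle them correctly.
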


Another well-known result about continued fractions is that:
    \begin{lemma}\label{xxx}
 Let $\alpha$ and $\beta$ be two irrational real numbers. If there exists an $A\in PSL(2,\mathbb{Z})$ such that $\beta=A\alpha$.
, then the continued fraction expansions of $\alpha$ and $\beta$
have the same tail.
   \end{lemma}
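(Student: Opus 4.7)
My plan is to identify, for $n$ sufficiently large, the complete quotient $\alpha_{n+1}=[a_{n+1};a_{n+2},\ldots]$ of $\alpha$ with some complete quotient of $\beta$; this will immediately give the common tail. The argument is a direct matrix computation leveraging the $M(\cdot)$ formalism introduced in Section~2.

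First, from $M(a_0,\ldots,a_n)=\begin{pmatrix}p_n&p_{n-1}\\q_n&q_{n-1}\end{pmatrix}$ and the defining relation of $\alpha_{n+1}$ one has the standard identity
$$\alpha=\frac{p_n\alpha_{n+1}+p_{n-1}}{q_n\alpha_{n+1}+q_{n-1}}.$$
Substituting into $\beta=A\alpha=(a\alpha+b)/(c\alpha+d)$ yields
$$\beta=\frac{P\alpha_{n+1}+R}{Q\alpha_{n+1}+S},$$
where $\begin{pmatrix}P&R\\Q&S\end{pmatrix}=A\cdot M(a_0,\ldots,a_n)$; by (\ref{ss1}) together with $\det A=1$, one has $PS-QR=\pm1$.

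Next I verify that $n$ can be chosen so large that $(P,Q,R,S)$ satisfies the classical conditions $Q>S>0$ and $PS-QR=\pm1$ under which $P/Q$ and $R/S$ are successive convergents of some finite simple continued fraction. Because $\alpha$ is irrational, $c\alpha+d\neq0$, so $Q=cp_n+dq_n\sim q_n(c\alpha+d)$ and $S=cp_{n-1}+dq_{n-1}\sim q_{n-1}(c\alpha+d)$, while $q_n>q_{n-1}$ by Lemma~\ref{ss}; hence $|Q|>|S|>0$ for every sufficiently large $n$. After possibly replacing $A$ by $-A$ (the same element of $PSL(2,\mathbb{Z})$), we may assume $Q>S>0$.

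The classical recognition theorem for continued fractions then produces positive integers $c_0,c_1,\ldots,c_m$ with $M(c_0,\ldots,c_m)=\begin{pmatrix}P&R\\Q&S\end{pmatrix}$, so that
$$\beta=\frac{P\alpha_{n+1}+R}{Q\alpha_{n+1}+S}=[c_0;c_1,\ldots,c_m,a_{n+1},a_{n+2},\ldots].$$
By uniqueness of continued fraction expansions this is the continued fraction expansion of $\beta$, hence its tail from position $m+1$ coincides with the tail of $\alpha$ from position $n+1$. The main obstacle is the bookkeeping of signs and inequalities required to apply the recognition theorem; once these are in place, the algebraic identification is automatic, and Lemma~\ref{ss3} confirms the uniqueness of the matrix factorization.
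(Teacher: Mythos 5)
Your argument is correct, and it is the classical Serret-type proof of this equivalence statement. Note that the paper offers no proof at all: Lemma \ref{xxx} is stated there as "another well-known result about continued fractions," so there is nothing internal to compare against; your write-up supplies exactly the standard argument one would cite (substitute $\alpha=\dfrac{p_n\alpha_{n+1}+p_{n-1}}{q_n\alpha_{n+1}+q_{n-1}}$, multiply the matrices, normalize signs, and invoke the recognition theorem for unimodular matrices with $Q>S>0$, choosing the parity of $m$ to match the sign of the determinant). One spot is stated more loosely than it should be: the asymptotics $Q\sim q_n(c\alpha+d)$, $S\sim q_{n-1}(c\alpha+d)$ together with $q_n>q_{n-1}$ do not by themselves force $|Q|>|S|$, since $q_n/q_{n-1}$ can approach $1$ along a subsequence when partial quotients are large. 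The gap is easily closed: writing $Q=q_n(c\alpha+d)+c(p_n-q_n\alpha)$ with $|p_n-q_n\alpha|<1/q_{n+1}$ gives $|Q|-|S|\geq(q_n-q_{n-1})|c\alpha+d|-2|c|\geq q_{n-2}|c\alpha+d|-2|c|\to\infty$, so $Q$ and $S$ are nonzero, of the same sign, and satisfy $|Q|>|S|$ for all large $n$. With that repair the proof is complete.
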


As in \cite{ab2,ab3,ab4,abd,bu} the proofs of Theorems \ref{main4}
and \ref{main5} need the Schmidt subspace theorem \cite{sch}.

 \begin{theorem} \label{main8}
Let $n\geq1$ be an integer. For every
$$\textbf{x}=(x_0,\cdots,x_n)\in\mathbb{Z}^{n+1},$$ set
$$\|\textbf{x}\|=\max_{i}(| x_{i}|).$$
Let $L_{0}(\textbf{x}),\cdots,L_{n}(\textbf{x})$ be linearly
independent linear forms in $n+1$ variables with algebraic
coefficients. Then for any positive number $\epsilon$, the
solutions $\textbf{x}\in\mathbb{Z}^{n+1}$ of the inequality
$$\prod_{i=1}^{n+1}|L_{i}(\textbf{x})|\leq\|\textbf{x}\|^{-\epsilon}$$ lie in finitely many proper linear subspaces of
$\mathbb{Q}^{n+1}$
\end{theorem}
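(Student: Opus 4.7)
Since Theorem \ref{main8} is the Schmidt Subspace Theorem, which the excerpt explicitly cites as \cite{sch}, the plan is not to reprove it but to quote it: a self‑contained proof would occupy a small monograph. Nevertheless, let me sketch the shape of Schmidt's argument so that what is being invoked is transparent, and so that one can see how the structure specialises when only two or three forms are in play (the case actually needed for Theorems \ref{main4} and \ref{main5}).

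The base case $n=1$, with $L_0(x_0,x_1)=x_0-\alpha x_1$ and $L_1(x_0,x_1)=x_1$, reduces the product condition $\prod_i|L_i(\mathbf{x})|\leq\|\mathbf{x}\|^{-\epsilon}$ to $|q\alpha-p|\cdot|q|\leq\max(|p|,|q|)^{-\epsilon}$, which is exactly Roth's theorem. So Schmidt's theorem is proved by induction on $n$ with Roth as the anchor, and the induction is driven by the geometry of numbers: after rescaling so that $|\det(L_0,\dots,L_n)|=1$, one analyses the parallelepipeds $\Pi_Q=\{\mathbf{x}:|L_i(\mathbf{x})|\leq Q_i\}$ via Minkowski's second theorem on successive minima. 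An integer solution violating the inequality forces the successive minima of $\Pi_Q$ to be unbalanced, producing a short lattice basis whose image under $(L_0,\ldots,L_n)$ is tightly controlled.

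The inductive step is powered by a higher‑dimensional analogue of Roth's index lemma. Given a hypothetical infinite family of exceptional solutions $\mathbf{x}^{(1)},\ldots,\mathbf{x}^{(m)}$ not concentrated on finitely many proper subspaces, one uses Siegel's lemma to construct an auxiliary multihomogeneous polynomial that vanishes to very high prescribed index at the corresponding product point and has tiny coefficients. A non‑vanishing (or "product") theorem — Schmidt's original combinatorial Wronskian estimate in \cite{sch}, or equivalently the Faltings--W\"ustholz product theorem in the modern reworking — then yields a contradiction, because the polynomial cannot simultaneously vanish to such high order at many well‑separated points of controlled height and have such small coefficients. This forces all but finitely many exceptional $\mathbf{x}$ into a finite union of proper rational subspaces of $\mathbb{Q}^{n+1}$.

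The hardest step is the product/non‑vanishing theorem: everything else is geometry of numbers, Siegel's lemma, and careful bookkeeping of weights, and each of those ingredients has become routine. The product theorem, by contrast, is the one deep input without a short proof. Since the present paper uses Theorem~\ref{main8} purely as a black box — the actual contribution lies in choosing the right linear forms $L_i$ to model continued fraction expansions, which will be done in Sections~3--5 — no new ideas about the Subspace Theorem itself are required here, and I would simply reference \cite{sch}.
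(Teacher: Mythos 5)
Your proposal is correct and matches the paper exactly: the paper states Theorem~\ref{main8} as a known result cited from \cite{sch} and offers no proof, using it purely as a black box in Sections 3--5. Your sketch of Schmidt's argument is accurate but optional; simply citing \cite{sch} is all that is required here.
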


We also need the following lemma which is contained in the proof
of Theorem 3.1 from \cite{bu}. For the readers' convenience, we
give here a different proof.
 \begin{lemma}\label{ssTte}
 Let $\alpha$ be
an real algebraic number of degree higher than 2, and let
$\{p_{n}/q_{n}\}_{n\geq0}$ be the sequence of convergents of the
continued fraction expansion of $\alpha$. Then there do not exist
a nonzero element $(x_{1},x_{2},x_{3},x_{4})\in \mathbb{Q}^{4}$,
and an infinite subset $\mathbb{N}'$ of $\mathbb{N}$ such that

$$x_{1}q_{n-1}+x_{2}p_{n-1}+x_{3}q_{n}+x_{4}p_{n}=0,$$
 for each $n\in \mathbb{N}'$.
   \end{lemma}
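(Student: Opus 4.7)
The plan is to apply the Schmidt subspace theorem to produce a second linear relation on the convergent data, then combine it with the hypothesis and the Bezout identity to trap $\alpha$ inside a quadratic number field, which contradicts $\deg\alpha>2$. First, I handle the degenerate cases: if $x_1+x_2\alpha=0$, the irrationality of $\alpha$ forces $x_1=x_2=0$, and the relation reduces to $p_n/q_n=-x_3/x_4$ being a fixed rational for infinitely many $n$, contradicting $p_n/q_n\to\alpha\notin\mathbb{Q}$; the case $x_3+x_4\alpha=0$ is symmetric. So I assume $(x_1+x_2\alpha)(x_3+x_4\alpha)\neq 0$ and, by the symmetry of the relation, $x_4\neq 0$. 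Using the hypothesis to express $p_n=-(x_1q_{n-1}+x_2p_{n-1}+x_3q_n)/x_4$, I view $\mathbf{w}_n:=(q_{n-1},p_{n-1},q_n)$ as a lattice point in $\mathbb{Z}^3$.

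On $\mathbb{Z}^3$ I take the three linear forms
\[
M_1=Y_1,\qquad M_2=Y_2-\alpha Y_1,\qquad M_3=\tfrac{x_1}{x_4}Y_1+\tfrac{x_2}{x_4}Y_2+\bigl(\alpha+\tfrac{x_3}{x_4}\bigr)Y_3;
\]
their determinant equals $\alpha+x_3/x_4\neq 0$, so they are linearly independent. Evaluated at $\mathbf{w}_n$, the first form is $q_{n-1}$, the second is $p_{n-1}-\alpha q_{n-1}$, and the third simplifies via the hypothesis to $\alpha q_n-p_n$. By Lemma \ref{ss}, $|\alpha q_k-p_k|<1/q_{k+1}$, so for every $\epsilon\in(0,1)$ and all large $n\in\mathbb{N}'$,
\[
|M_1M_2M_3|(\mathbf{w}_n) < \frac{q_{n-1}}{q_nq_{n+1}} \le \frac{1}{q_n} \le \|\mathbf{w}_n\|^{-\epsilon}.
\]
The Schmidt subspace theorem (Theorem \ref{main8}) therefore confines the $\mathbf{w}_n$ to finitely many proper rational subspaces of $\mathbb{Q}^3$, and pigeonholing yields an infinite $\mathbb{N}''\subseteq\mathbb{N}'$ together with a nonzero $(y_1,y_2,y_3)\in\mathbb{Q}^3$ satisfying $y_1q_{n-1}+y_2p_{n-1}+y_3q_n=0$ for every $n\in\mathbb{N}''$. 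Since $(y_1,y_2,y_3,0)$ is not a scalar multiple of $(x_1,x_2,x_3,x_4)$ (because $x_4\neq 0$), the vector $\mathbf{v}_n:=(q_{n-1},p_{n-1},q_n,p_n)$ lies in a fixed rational $2$-plane $V\subset\mathbb{Q}^4$ for $n\in\mathbb{N}''$.

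Choose an integer basis $\mathbf{u},\mathbf{v}$ of $V\cap\mathbb{Z}^4$ and write $\mathbf{v}_n=s_n\mathbf{u}+t_n\mathbf{v}$ with $s_n,t_n\in\mathbb{Z}$. Substituting into the Bezout identity $p_nq_{n-1}-p_{n-1}q_n=(-1)^{n+1}$ (equation (\ref{ss1})) produces a binary quadratic-form equation
\[
As_n^2+Bs_nt_n+Ct_n^2=\pm 1,
\]
with $A,B,C\in\mathbb{Z}$ determined by $\mathbf{u},\mathbf{v}$. Passing to an infinite sub-sequence on which the sign is fixed, a case analysis on the discriminant $\Delta:=B^2-4AC$ (negative $\Delta$: ellipse, only finitely many solutions; zero or square $\Delta$: the conic degenerates to lines, forcing $(s_n,t_n)$ onto a rational line and hence $p_n/q_n$ to be eventually a fixed rational, contradicting $p_n/q_n\to\alpha\notin\mathbb{Q}$; positive non-square $\Delta$: a genuine Pell-type equation) leaves only the Pell case, in which $t_n/s_n$ tends to a root $\xi=(-B+\sqrt{\Delta})/(2C)$ of $A+B\xi+C\xi^2=0$. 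Writing $\mathbf{u}=(u_1,u_2,u_3,u_4)$, $\mathbf{v}=(v_1,v_2,v_3,v_4)$, one obtains
\[
\alpha=\lim_{n\in\mathbb{N}''}\frac{p_n}{q_n}=\frac{u_4+\xi v_4}{u_3+\xi v_3}\in\mathbb{Q}(\sqrt{\Delta}),
\]
so $\alpha$ is algebraic of degree at most $2$, contradicting $\deg\alpha>2$.

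The main obstacle is the final case analysis of the Pell-type equation: one must carefully dispatch the degenerate conics (ellipse, parabola, reducible over $\mathbb{Q}$, or a pair of parallel lines) either by appeal to the finiteness of their integer points or by observing that $(s_n,t_n)$ confined to a rational line would force $p_n/q_n$ to be eventually a fixed rational, in contradiction with the irrationality of $\alpha$; only the genuine Pell case survives and it is precisely the one that supplies the quadratic irrationality of $\alpha$.
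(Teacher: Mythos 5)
Your proof is correct, and it shares its skeleton with the paper's: exclude the degenerate vanishing of $(x_1,x_2)$ and $(x_3,x_4)$, use the hypothesised relation to eliminate one coordinate, apply the subspace theorem to three linear forms in $(q_{n-1},p_{n-1},q_n)$-type data (two of them small by Lemma \ref{ss}), extract a second independent linear relation, and finally invoke the determinant identity $p_nq_{n-1}-p_{n-1}q_n=(-1)^{n+1}$. Where you diverge is the endgame. The paper solves the two relations for $p_{n-1}$ and $q_{n-1}$ as fixed rational combinations of $q_n,p_n$, divides and passes to the limit to get the M\"obius fixed-point equation $\alpha=(a+b\alpha)/(c+d\alpha)$, which degree $>2$ collapses to $a=d=0$, $b=c$, contradicting Bezout directly. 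You instead parametrise the rational $2$-plane by an integer basis and turn Bezout into a binary quadratic equation $Q(s_n,t_n)=\pm1$, then run a discriminant case analysis in which only the Pell case survives and yields $\alpha\in\mathbb{Q}(\sqrt{\Delta})$. Your route is heavier -- the definite/degenerate/square-discriminant cases all need the finiteness or collinearity arguments you sketch -- but it is sound and arguably more illuminating, since it exhibits the Pell structure as the precise obstruction that quadratic $\alpha$ would exploit. Two small points worth tightening: the claim that ``by symmetry $x_4\neq0$'' really means that each of the four cases $x_i\neq0$ is handled by solving for the corresponding coordinate and adjusting the three forms (whose determinant is then $(x_3+x_4\alpha)/x_i$ or $(x_1+x_2\alpha)/x_i$, nonzero by your degenerate-case exclusion), exactly as the paper only details $x_1\neq0$; and in the Pell case you should note that $s_n\neq0$ eventually and that the limit denominator $u_3+\xi v_3$ cannot vanish because $\xi$ is irrational.
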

\begin{proof}
Assume that there exist a nonzero element
$(x_{1},x_{2},x_{3},x_{4})\in \mathbb{Q}^{4}$, and an infinite
subset $\mathbb{N}'$ of $\mathbb{N}$ such that

\begin{equation}\label{for5}x_{1}q_{n-1}+x_{2}p_{n-1}+x_{3}q_{n}+x_{4}p_{n}=0,\end{equation}
 for each $n\in \mathbb{N}'$. First, we have
 $(x_{1},x_{2})\neq(0,0)$, otherwise dividing (\ref{for5}) by
 $q_{n}$ and letting $n$ tend to infinity along $\mathbb{N}'$
 would implies that $\alpha$ is rational. Similarly, $(x_{3},x_{4})\neq(0,0)$. Now we assume that
 $x_{1}\neq0$ and define three linearly independent linear forms
 $$L_{1}(X,Y,Z)=(1+\alpha)\frac{x_{2}}{x_{1}}X+\alpha\frac{x_{3}}{x_{1}}Y+\alpha\frac{x_{4}}{x_{1}}Z,$$
 $$L_{2}(X,Y,Z)=Z-\alpha Y,\   L_{3}(X,Y,Z)=X.$$ By (\ref{for5}) and
 Lemma \ref{ss}, we have
 $$\prod_{1\leq i\leq3}|L_{i}(p_{n-1},q_{n},p_{n})|<p_{n-1}q^{-2}_{n}<(|\alpha|+2)q^{-1}_{n},$$
 for each $n\in \mathbb{N}'$. It follows from Theorem \ref{main8}
 that there exist a nonzero element
$(y_{1},y_{2},y_{3})\in \mathbb{Q}^{3}$, and an infinite subset
$\mathbb{N}''$ of $\mathbb{N}'$ such that
\begin{equation}\label{for6}y_{1}p_{n-1}+y_{2}q_{n}+y_{3}p_{n}=0,\end{equation}
 for each $n\in \mathbb{N}''$. We observe as before that $
 y_{1}\neq0$. Now combining (\ref{for5}) and (\ref{for6}) implies
 that there exists a nonzero element
$(a,b,c,d)\in \mathbb{Q}^{4}$ such that $$p_{n-1}=aq_{n}+bp_{n}$$
and $$q_{n-1}=cq_{n}+dp_{n}$$ for each $n\in \mathbb{N}''$.
Letting $n$ tend to infinity along $\mathbb{N}''$, we get
\begin{equation}\label{for7}\alpha=\frac{a+b\alpha}{c+d\alpha}.\end{equation} As $\alpha$ is an algebraic number of degree higher than
2, (\ref{for7}) forces $$a=d=0, \ b=c\neq0,$$ and then
$$p_{n-1}q_{n}=p_{n}q_{n-1},$$  for each $n\in \mathbb{N}''$. This contradicts (\ref{ss1}).
The case $x_{2}\neq0$ can be treated similarly.
\end{proof}

 \section{Proof of Theorem \ref{main4}}

\begin{proof}[Proof of Theorem \ref{main4}]
Assume that (i), (ii) and (iii)  of Condition \ref{c1} are
satisfied with three sequences of finite nonempty words
$\{A_n\}_{n\geq1}$, $\{A'_n\}_{n\geq1}$, $\{B_n\}_{n\geq1}$. If
$A_n=Ca$ and $A'_n=C'a$ for some $n$ and some positive integer
$a$, we can replace $A_n,A'_n,B_n$ with $C,C',aB_n$ without
violating (i), (ii) and (iii)  of Condition \ref{c1}. Hence we can
 further require that

 \begin{convention} \label{convi1}
 the last letter of $A_n$ and the last
letter of $A'_n$ are different if $\min(|A_n|,|A'_n|)\geq3$.
\end{convention}

 Set
$k_n=|A_n|$, $l_n=|A'_n|$, and $m_n=|B_n|$. Then we have

\[\begin{matrix}\begin{pmatrix}p_{k_{n}}&p_{k_{n}-1}\\q_{k_{n}}&q_{k_{n}-1}\end{pmatrix}\textsl{M}(B_n)=
\begin{pmatrix}p_{k_{n}+m_n}&p_{k_{n}+m_n-1}\\q_{k_{n}+m_n}&q_{k_{n}+m_n-1}\end{pmatrix},\\
\end{matrix}\]

and

\[\begin{matrix}\begin{pmatrix}p'_{l_{n}}&p'_{l_{n}-1}\\q'_{l_{n}}&q'_{l_{n}-1}\end{pmatrix}\textsl{M}(B_n)=
\begin{pmatrix}p'_{l_{n}+m_n}&p'_{l_{n}+m_n-1}\\q'_{l_{n}+m_n}&q'_{l_{n}+m_n-1}\end{pmatrix}.\\
\end{matrix}\]
\bigskip
The above two identities immediately imply
{\setlength{\arraycolsep}{0pt}
\begin{eqnarray} \label{for77}
&&\begin{pmatrix}p_{k_{n}}&p_{k_{n}-1}\\q_{k_{n}}&q_{k_{n}-1}\end{pmatrix}
\begin{pmatrix}q'_{l_{n}-1}&-p'_{l_{n}-1}\\-q'_{l_{n}}&p'_{l_{n}}\end{pmatrix} \\
&=&\begin{pmatrix}p_{k_{n}+m_n}&p_{k_{n}+m_n-1}\\q_{k_{n}+m_n}&q_{k_{n}+m_n-1}\end{pmatrix}
\begin{pmatrix}q'_{l_{n}+m_n-1}&-p'_{l_{n}+m_n-1}\\-q'_{l_{n}+m_n}&p'_{l_{n}+m_n}\end{pmatrix}.\nonumber
\end{eqnarray}
}

We define four linearly independent linear forms as follows:
{\setlength{\arraycolsep}{0pt}
\begin{eqnarray*}
L_{1}(X_1,X_2,X_3,X_4)&=&\alpha \alpha'X_1-\alpha X_2-\alpha'X_3+X_4,\\
L_{2}(X_1,X_2,X_3,X_4)&=&\alpha'X_1-X_2,\\
L_{3}(X_1,X_2,X_3,X_4)&=&\alpha X_1-X_3,\\
L_{4}(X_1,X_2,X_3,X_4)&=&X_1.\\
\end{eqnarray*}
} Set
$$\phi_n=(q_{k_{n}}q'_{l_{n}-1}-q_{k_{n}-1}q'_{l_{n}},q_{k_{n}}p'_{l_{n}-1}-q_{k_{n}-1}p'_{l_{n}},
p_{k_{n}}q'_{l_{n}-1}-p_{k_{n}-1}q'_{l_{n}},p_{k_{n}}p'_{l_{n}-1}-p_{k_{n}-1}p'_{l_{n}}).$$

It follows from   Lemma \ref{ss} and  (\ref{for77}) that
{\setlength{\arraycolsep}{0pt}
\begin{eqnarray} \label{for67}
&&|L_{1}(\phi_n)|\\
&=&|\alpha \alpha'(q_{k_{n}}q'_{l_{n}-1}-q_{k_{n}-1}q'_{l_{n}})-\alpha(q_{k_{n}}p'_{l_{n}-1}-q_{k_{n}-1}p'_{l_{n}})\nonumber \\
&-&\alpha'(p_{k_{n}}q'_{l_{n}-1}-p_{k_{n}-1}q'_{l_{n}})+(p_{k_{n}}p'_{l_{n}-1}-p_{k_{n}-1}p'_{l_{n}})|\nonumber\\
&=&|\alpha \alpha'(q_{k_{n}+m_n}q'_{l_{n}+m_n-1}-q_{k_{n}+m_n-1}q'_{l_{n}+m_n})\nonumber\\
&-&\alpha(q_{k_{n}+m_n}p'_{l_{n}+m_n-1}-q_{k_{n}+m_n-1}p'_{l_{n}+m_n}) \nonumber\\
&-&\alpha'(p_{k_{n}+m_n}q'_{l_{n}+m_n-1}-p_{k_{n}+m_n-1}q'_{l_{n}+m_n})\nonumber\\
&+&(p_{k_{n}+m_n}p'_{l_{n}+m_n-1}-p_{k_{n}+m_n-1}p'_{l_{n}+m_n})|\nonumber\\
&=&|(\alpha q_{k_{n}+m_n}-p_{k_{n}+m_n})(\alpha' q'_{l_{n}+m_n-1}-p'_{l_{n}+m_n-1})\nonumber \\
&-&(\alpha q_{k_{n}+m_n-1}-p_{k_{n}+m_n-1})(\alpha' q'_{l_{n}+m_n}-p'_{l_{n}+m_n})|\nonumber\\
&<&2q^{-1}_{k_{n}+m_n}q'^{-1}_{l_{n}+m_n}.\nonumber
\end{eqnarray}
}

Set $$M=\max_{n\geq1}\{(q_{n}q'_{n})^{1/n}\}.$$

Then by Lemma \ref{ss} and (iii)  of Condition \ref{c1}

{\setlength{\arraycolsep}{0pt}
\begin{eqnarray*}\label{for90}
\prod_{1\leq i\leq4}|L_{i}(\phi_n)|&\ll& q_{k_{n}}q'_{l_{n}}q^{-1}_{k_{n}+m_n}q'^{-1}_{l_{n}+m_n}\\
&\ll&2^{-\tfrac{m_n}{2}}\\
&\ll&M^{-Lm_n\delta}\\
&\ll&(q_{k_{n}}q'_{l_{n}})^{-\delta}
\end{eqnarray*}
} where $\delta=\tfrac{\log2}{2L\log M}$. Here and throughout, the
constants implied in $\ll$ depend only on $\alpha$ and $\alpha'$.

Now applying Theorem \ref{main8} implies
 that there exist a nonzero element
$(x_{1},x_{2},x_{3},x_{4})\in \mathbb{Q}^{4}$, and an infinite subset
$\mathbb{N}'$ of $\mathbb{N}$ such that

{\setlength{\arraycolsep}{0pt}
\begin{eqnarray}\label{for90}
&&x_{1}(q_{k_{n}}q'_{l_{n}-1}-q_{k_{n}-1}q'_{l_{n}})+x_{2}(q_{k_{n}}p'_{l_{n}-1}-q_{k_{n}-1}p'_{l_{n}})\\
&+&x_{3}(p_{k_{n}}q'_{l_{n}-1}-p_{k_{n}-1}q'_{l_{n}})+x_{4}(p_{k_{n}}p'_{l_{n}-1}-p_{k_{n}-1}p'_{l_{n}})\nonumber\\
&=&0 \nonumber
\end{eqnarray}
} for each $n\in \mathbb{N}'$.

\bigskip

The rest proof is divided into 3 cases.

\bigskip

 Case 1: there exists an infinite subset
$\mathbb{N}''$ of $\mathbb{N}'$ such that both the sequences $\{k_n\}_{n\in \mathbb{N}''}$, $\{l_n\}_{n\in \mathbb{N}''}$ are bounded. Then $a= a_1a_2\cdots$  and
$\textrm{a}'= a'_1a'_2\cdots$
 have the same tail.

  Case 2: there exists an infinite subset
$\mathbb{N}''$ of $\mathbb{N}'$ such that only one of the
sequences $\{k_n\}_{n\in \mathbb{N}''}$, $\{l_n\}_{n\in
\mathbb{N}''}$ is bounded.  Without loss of generality, we assume
that  $\{k_n\}_{n\in \mathbb{N}''}$ is bounded and $\{l_n\}_{n\in
\mathbb{N}''}$ is unbounded. Then there exists an infinite subset
$\mathbb{N}'''$ of $\mathbb{N}''$ and a positive integer $k$ such
that $k_n=k$ for each $n\in \mathbb{N}'''$ and $\{l_n\}_{n\in
\mathbb{N}''}$ tends to infinity. If $\alpha'$ is a quadratic
irrational number, then the continued fraction expansion
$$\alpha'=[[\alpha'];a'_1,a'_2,\cdots]$$ is eventually periodic,
and we can replace $A'_n$ with a prefix of  bounded length without
violating (i),(ii) and (iii)  of Condition \ref{c1}, and the proof
can be reduced to Case 1. If $\alpha'$ is an algebraic number of
degree higher than 2, then by (\ref{for90}) we have
{\setlength{\arraycolsep}{0pt}
\begin{eqnarray}\label{for91}
&&(x_{1}q_{k}+x_{3}p_{k})q'_{l_{n}-1}-(x_{1}q_{k-1}+x_{3}p_{k-1})q'_{l_{n}}\\
&+&(x_{2}q_{k}+x_{4}p_{k})p'_{l_{n}-1}-(x_{2}q_{k-1}+x_{4}p_{k-1})p'_{l_{n}}\nonumber\\
&=&0 \nonumber
\end{eqnarray}
}
for each  $n\in \mathbb{N}'''$. This contradicts Lemma
\ref{ssTte} since the matrix

\[\begin{matrix}\begin{pmatrix}p_{k}&p_{k-1}\\q_{k}&q_{k-1}\end{pmatrix}
\end{matrix}\] is nonsingular and hence
{\setlength{\arraycolsep}{0pt}
\begin{eqnarray*}
&&(x_{1}q_{k}+x_{3}p_{k},x_{1}q_{k-1}+x_{3}p_{k-1},x_{2}q_{k}+x_{4}p_{k},x_{2}q_{k-1}+x_{4}p_{k-1})\\
&\neq &(0,0,0,0). \\
\end{eqnarray*}
}
 Case 3: there exists an infinite subset
$\mathbb{N}''$ of $\mathbb{N}'$ such that both the sequences
$\{k_n\}_{n\in \mathbb{N}''}$, $\{l_n\}_{n\in \mathbb{N}''}$ are
strictly increasing. If at least one  of $\alpha$ and $\alpha'$ is
a quadratic irrational number, then the proof can be  reduced to
Case 2 as before. Hence we assume that both $\alpha$ and $\alpha'$
are algebraic numbers of degree higher than 2.

Set
\begin{equation}\label{foredwrf}\begin{pmatrix}p_{k_{n}}&p_{k_{n}-1}\\q_{k_{n}}&q_{k_{n}-1}\end{pmatrix}
\begin{pmatrix}q'_{l_{n}-1}&-p'_{l_{n}-1}\\-q'_{l_{n}}&p'_{l_{n}}\end{pmatrix}=\begin{pmatrix}c_{n}&-d_{n}\\ a_{n}&-b_{n}\end{pmatrix}
.\end{equation} By Lemma \ref{ss1}, we have
\begin{equation}\label{for946}\begin{vmatrix}c_{n}&-d_{n}\\ a_{n}&-b_{n}\end{vmatrix}
=\pm1.\end{equation}

Set $M_{n}=\max (|a_{n}|, |b_{n}|,|c_{n}|,|d_{n}|).$

 \begin{claim}\label{5645}
$$\lim_{n\in \mathbb{N}'' \atop n\rightarrow\infty}
M_{n}=+\infty.$$
\end{claim}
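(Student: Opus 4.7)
The plan is to argue by contradiction. Suppose that $M_n$ does not tend to infinity along $\mathbb{N}''$. Then some sub-subsequence $\mathbb{N}''' \subseteq \mathbb{N}''$ is such that $M_n \le M_0$ is bounded, which means the integer tuple $(a_n,b_n,c_n,d_n)$ takes only finitely many values on $\mathbb{N}'''$. By pigeonhole, I extract a further infinite subset $\mathbb{N}'''' \subseteq \mathbb{N}'''$ along which $(a_n,b_n,c_n,d_n) \equiv (a,b,c,d)$ is constant. Identity (\ref{for946}) forces $ad - bc = \pm 1$, so in particular $(a,b) \ne (0,0)$.

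Next, I would invert the right-hand $2 \times 2$ block in (\ref{foredwrf}): since
$$\begin{pmatrix} q'_{l_n-1} & -p'_{l_n-1} \\ -q'_{l_n} & p'_{l_n}\end{pmatrix}$$
has determinant $\pm 1$, rearranging (\ref{foredwrf}) gives
$$\begin{pmatrix} p_{k_n} & p_{k_n-1} \\ q_{k_n} & q_{k_n-1}\end{pmatrix} = \pm \begin{pmatrix} c & -d \\ a & -b\end{pmatrix}\begin{pmatrix} p'_{l_n} & p'_{l_n-1} \\ q'_{l_n} & q'_{l_n-1}\end{pmatrix}$$
for every $n \in \mathbb{N}''''$. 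Reading off the first column and dividing, I obtain
$$\frac{p_{k_n}}{q_{k_n}} = \frac{c\,(p'_{l_n}/q'_{l_n}) - d}{a\,(p'_{l_n}/q'_{l_n}) - b}.$$
Since both $k_n, l_n \to \infty$ along $\mathbb{N}''''$ (we are inside Case 3), we have $p_{k_n}/q_{k_n}\to\alpha$ and $p'_{l_n}/q'_{l_n}\to\alpha'$; because $\alpha'$ is irrational and $(a,b)\ne(0,0)$, the denominator $a\alpha'-b$ is nonzero, and passing to the limit gives
$$\alpha = \frac{c\alpha' - d}{a\alpha' - b},$$
so $\alpha \in PSL(2,\mathbb{Z})\cdot\alpha'$.

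By Lemma \ref{xxx}, this already implies that $\mathbf{a}$ and $\mathbf{a}'$ share a common tail, which is precisely the conclusion of Theorem \ref{main4} in this situation. Consequently one may replace $\mathbb{N}''$ by the complementary sub-sequence along which $M_n \to +\infty$, and continue the Case 3 analysis under that divergence assumption; equivalently, the bounded-$M_n$ scenario is disposed of immediately and never requires the further Schmidt-subspace argument that follows.

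The only subtle point I anticipate is bookkeeping the nested subsequence extractions (first to bound $M_n$, then to fix the quadruple $(a,b,c,d)$), and making sure the limiting identity is not vacuous — i.e., verifying $a\alpha'-b\ne 0$ from $ad-bc=\pm 1$ together with the irrationality of $\alpha'$. Everything else is a mechanical manipulation of the $2\times 2$ matrix relation already in place.
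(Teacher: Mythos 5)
Your matrix manipulation is correct as far as it goes: from (\ref{foredwrf}) with a constant matrix $(a,b,c,d)$ of determinant $\pm1$ one does get $\alpha=\frac{c\alpha'-d}{a\alpha'-b}$ in the limit, hence $\alpha$ and $\alpha'$ are $GL(2,\mathbb{Z})$-equivalent. But this is not a proof of the Claim, and it cannot be completed into one along these lines, because boundedness of $M_n$ is \emph{not} intrinsically contradictory: if $\alpha=\alpha'$ and $A_n=A'_n$, then the matrix in (\ref{foredwrf}) is $\pm I$ for all $n$. What rules this out in the paper is Convention \ref{convi1} (the normalization that the last letters of $A_n$ and $A'_n$ differ once both are long), which you never invoke. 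The paper's proof goes the other way around the identity: a constant matrix forces $\textsl{M}(a_{k_n+1},\dots,a_{k_{n+1}})=\textsl{M}(a'_{l_n+1},\dots,a'_{l_{n+1}})$, hence by Lemma \ref{ss3} equality of the corresponding words, and in particular $a_{k_{n+1}}=a'_{l_{n+1}}$ — contradicting Convention \ref{convi1}. Any correct proof of the Claim must use that normalization (or an equivalent one); your argument, which concludes only "$\mathbf{a}$ and $\mathbf{a}'$ have the same tail," establishes a disjunction, not the stated limit.

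Your fallback — "the bounded case already yields the conclusion of Theorem \ref{main4}, so discard it" — also does not patch the overall proof. First, the assertion that one may pass to "the complementary sub-sequence along which $M_n\to+\infty$" is unjustified: if $M_n$ is bounded on a subsequence, nothing guarantees it diverges on the complement. Second, and more seriously, Case 3 must end in a genuine contradiction (under the hypothesis that both $\alpha,\alpha'$ have degree $>2$) because the "Moreover" assertion of Theorem \ref{main4} is proved precisely by running Cases 2 and 3 to a contradiction; replacing that contradiction by "same tail" leaves the quadraticity statement unproved, since two $GL(2,\mathbb{Z})$-equivalent algebraic numbers of degree $>2$ are perfectly possible. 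A minor further point: Lemma \ref{xxx} as stated covers only determinant $+1$, whereas your matrix may have determinant $-1$ (Serret's theorem does cover both, but the citation as given does not).
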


\begin{proof}
If the Claim is invalid, we can choose an infinite subset
$\mathbb{N}'''$ of $\mathbb{N}''$ such that
\[\begin{matrix}\label{for89}\begin{pmatrix}c_{n}&-d_{n}\\ a_{n}&-b_{n}\end{pmatrix}=\begin{pmatrix}c&-d\\
a&-b\end{pmatrix}\\
\end{matrix}\] is a constant matrix, when $n\in \mathbb{N}'''$. By (\ref{foredwrf}), this
means that
$$\textsl{M}(a_{k_{n}+1},\cdots,a_{k_{n+1}})=\textsl{M}(a'_{l_{n}+1},\cdots,a'_{l_{n+1}}),
$$ for each $n\in \mathbb{N}'''$.
Applying Lemma \ref{ss3}, we get
$$a_{k_{n}+1}\cdots a_{k_{n+1}}=a'_{l_{n}+1} \cdots
a'_{l_{n+1}},$$ for each $n\in \mathbb{N}'''$.  But this
contradicts Convention \ref{convi1}
\end{proof}

Now we assume that $x_{1}\neq0$, the other three cases can be
reduced to the case $x_{1}\neq0$ by replacing $\alpha$  with
$1/\alpha$ and/or replacing $\alpha'$  with $1/\alpha'$.  We can
further assume that $x_{1}=-1$ without loss of generality.

 Hence by
(\ref{for90}) we have

\begin{equation}\label{for8y}a_{n}=x_{2}b_{n}+x_{3}c_{n}+x_{4}d_{n},\end{equation} for $n\in \mathbb{N}''$.

From now on  we assume that there exists an infinite subset
$\mathbb{N}'''$ of $\mathbb{N}''$¡¡¡¡¡¡such that
\begin{equation}\label{for9y}q_{k_{n}}\leq
q'_{l_{n}}\end{equation} for each $n\in \mathbb{N}'''$;  the other
case can be treated similarly.

Now we define three linearly independent linear forms as follows:
{\setlength{\arraycolsep}{0pt}
\begin{eqnarray*}
L'_{1}(Y_1,Y_2,Y_3)&=&L_{1}(x_{2}Y_1+x_{3}Y_2+x_{4}Y_3,Y_1,Y_2,Y_3)\\
&=&(\alpha \alpha'x_2-\alpha )Y_1+(\alpha \alpha'x_3-\alpha' )Y_2+(\alpha \alpha'x_4+1)Y_3,\\
L'_{2}(Y_1,Y_2,Y_3)&=&\alpha Y_1-Y_3,\\
L'_{3}(Y_1,Y_2,Y_3)&=&Y_3.\\
\end{eqnarray*}
} (If $q_{k_{n}}\geq q'_{l_{n}}$ for infinitely many $n$, we can
set $L'_{2}(Y_1,Y_2,Y_3)=\alpha' Y_2-Y_3.$) Then by (\ref{for67})
and (\ref{for9y}), we have

{\setlength{\arraycolsep}{0pt}
\begin{eqnarray}\label{for900}
&& \prod_{1\leq i\leq3}|L'_{i}(b_{n},c_{n},d_{n})|\\
&\ll& q_{k_{n}}q'_{l_{n}}q^{-1}_{k_{n}+m_n}q'^{-1}_{l_{n}+m_n}\nonumber \\
&\ll&(q_{k_{n}}q'_{l_{n}})^{-\delta} \nonumber
\end{eqnarray}
} for $n\in \mathbb{N}'''$.

Now applying Theorem \ref{main8} implies
 that there exist a nonzero element
$(y_{1},y_{2},y_{3})\in \mathbb{Q}^{3}$, and an infinite subset
$\mathbb{N}^{(4)}$ of $\mathbb{N'''}$ such that
\begin{equation}\label{for82t}y_{1}b_{n}+y_{2}c_{n}+y_{3}d_{n}=0,\end{equation}
for each $n\in \mathbb{N}^{(4)}$.

 \begin{claim}\label{56451}
$$(y_{1},y_{2})\neq(0,0),$$ and $d_{n}\neq0$ if $n\in \mathbb{N}^{(4)}$ is sufficiently large.
\end{claim}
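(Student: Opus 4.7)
My plan is to prove the vanishing assertion $d_{n}\neq 0$ first, and then to deduce $(y_{1},y_{2})\neq(0,0)$ as an immediate consequence of (\ref{for82t}). The heart of the argument is a uniqueness principle for finite continued fractions, combined with Convention \ref{convi1}.

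Suppose toward a contradiction that $d_{n}=0$ for $n$ in some infinite subset $\mathbb{N}^{(5)}\subseteq\mathbb{N}^{(4)}$. Since $d_{n}=p_{k_{n}}p'_{l_{n}-1}-p_{k_{n}-1}p'_{l_{n}}$, the vanishing gives
$$\frac{p_{k_{n}}}{p_{k_{n}-1}}=\frac{p'_{l_{n}}}{p'_{l_{n}-1}}.$$
The identity (\ref{ss1}) forces $\gcd(p_{k_{n}},p_{k_{n}-1})=\gcd(p'_{l_{n}},p'_{l_{n}-1})=1$, and in Case 3 the sequences $k_{n},l_{n}$ tend to infinity, so all four numerators are positive for large $n$. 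Consequently both fractions are in lowest terms, giving $p_{k_{n}}=p'_{l_{n}}$ and $p_{k_{n}-1}=p'_{l_{n}-1}$. Next, the recurrence $p_{k}=a_{k}p_{k-1}+p_{k-2}$ together with the inequality $0\le p_{k-2}<p_{k-1}$ (valid for $k$ large, since $p_{k}$ is eventually strictly increasing) yields $a_{k_{n}}=\lfloor p_{k_{n}}/p_{k_{n}-1}\rfloor$ and analogously $a'_{l_{n}}=\lfloor p'_{l_{n}}/p'_{l_{n}-1}\rfloor$. We therefore conclude that $a_{k_{n}}=a'_{l_{n}}$ for all large $n\in\mathbb{N}^{(5)}$. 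But Convention \ref{convi1} dictates $a_{k_{n}}\neq a'_{l_{n}}$ whenever $\min(k_{n},l_{n})\geq 3$, which certainly holds in Case 3 for all but finitely many $n$. This contradiction establishes $d_{n}\neq 0$ for all sufficiently large $n\in\mathbb{N}^{(4)}$.

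The first assertion now follows at once: if $(y_{1},y_{2})=(0,0)$ then $y_{3}\neq 0$, and (\ref{for82t}) would force $d_{n}=0$ for every $n\in\mathbb{N}^{(4)}$, contradicting what we just proved. The main obstacle is translating the arithmetic identity $d_{n}=0$ into an equality of partial quotients; this passes through the coprimality of consecutive numerators and the recovery of $a_{k_{n}}$ as the integer part of $p_{k_{n}}/p_{k_{n}-1}$, both of which rely on positivity of the relevant $p$'s and hence hold only asymptotically. Once these reductions are in place, Convention \ref{convi1} delivers the contradiction with essentially no further work.
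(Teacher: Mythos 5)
Your proof is correct in substance but follows a genuinely different route from the paper. The paper argues: if $y_1=y_2=0$ then $d_n=0$, and since the matrix $\bigl(\begin{smallmatrix}c_n&-d_n\\ a_n&-b_n\end{smallmatrix}\bigr)$ has determinant $\pm1$ (equation (\ref{for946})), the vanishing of $d_n$ forces $|b_n|=|c_n|=1$; then (\ref{for8y}) bounds $|a_n|$, so $M_n$ stays bounded, contradicting Claim \ref{5645}. You instead go straight from $d_n=p_{k_n}p'_{l_n-1}-p_{k_n-1}p'_{l_n}=0$ to the ratio identity $p_{k_n}/p_{k_n-1}=p'_{l_n}/p'_{l_n-1}$ and recover the partial quotients as integer parts, contradicting Convention \ref{convi1} directly. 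Your argument is more elementary and self-contained — it bypasses both the unimodularity relation (\ref{for946}) and Claim \ref{5645} (note, though, that Claim \ref{5645} is itself ultimately proved from the same Convention, so the two proofs share a common source of contradiction); the paper's version is shorter on the page because it recycles machinery already established. Two small points to tighten: the coprimality/lowest-terms step is superfluous, since the ratio equality alone gives $\lfloor p_{k_n}/p_{k_n-1}\rfloor=\lfloor p'_{l_n}/p'_{l_n-1}\rfloor$; and your assertion that all four numerators are positive for large $n$ is only guaranteed after normalizing $\alpha,\alpha'>0$ (if $\alpha<0$ the $p_{k_n}$ are eventually negative) — the fix is immediate, since for large $n$ the consecutive numerators have the same sign and satisfy $|p_{k_n-1}|<|p_{k_n}|$, so $p_{k_n-2}/p_{k_n-1}\in[0,1)$ and the floor argument still yields $a_{k_n}=a'_{l_n}$. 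The deduction of $(y_1,y_2)\neq(0,0)$ from $d_n\neq0$ via (\ref{for82t}) is fine and is logically the reverse order of the paper's presentation.
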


\begin{proof}
If $y_{1}=y_{2}=0$, we have $d_{n}=0$ for  $n\in \mathbb{N}'''$.
This force
\begin{equation}\label{for82rt}|b_{n}|=|c_{n}|=1\end{equation}
for  $n\in \mathbb{N}^{(4)}$. Combining Claim \ref{5645},
(\ref{for8y}) and (\ref{for82rt}) immediately yields a
contradiction.
\end{proof}

We assume that $y_{1}\neq0$; the case  $y_{2}\neq0$ can be treated
similarly. We can further assume that $y_{1}=-1$ without loss of
generality. Hence
\begin{equation}\label{for82ty}b_{n}=y_{2}c_{n}+y_{3}d_{n},\end{equation}
for each $n\in \mathbb{N}^{(4)}$. Set
{\setlength{\arraycolsep}{0pt}
\begin{eqnarray*}
L''_{1}(Z_1,Z_2)&=&L'_{1}(y_{2}Z_1+y_{3}Z_2,Z_1,Z_2),\\
&=&[y_{2}(\alpha \alpha'x_2-\alpha )+(\alpha \alpha'x_3-\alpha' )]Z_1,\\
&+&[y_{3}(\alpha \alpha'x_2-\alpha )+(\alpha \alpha'x_4+1)]Z_2.\\
\end{eqnarray*}
}

First we point out that $L''_{1}(Z_1,Z_2)\neq0$, otherwise, we
would have
$$\alpha'=\tfrac{y_{2}\alpha}{(y_{2}x_2+x_3)\alpha-1}=\tfrac{y_{3}\alpha-1}{\alpha(y_{3}x_2+x_4)},$$
which contradicts the fact that  $\alpha$ is of degree higher than
2.

 By (\ref{for67}) we have
$$|L''_{1}(c_{n},d_{n})|\ll(q_{k_{n}}q'_{l_{n}})^{-1-\delta}$$
Now we get as above  that there exist a  $z\in \mathbb{Q}$, and an
infinite subset $\mathbb{N}^{(5)}$ of $\mathbb{N}^{(4)}$ such that
\begin{equation}\label{f82t}c_{n}=zd_{n},\end{equation}
for each $n\in \mathbb{N}^{(5)}$. Combining (\ref{for8y}),
(\ref{for82ty})  and  (\ref{f82t}) implies that
\[\begin{matrix}\label{for94}\begin{pmatrix}c_{n}&-d_{n}\\ a_{n}&-b_{n}\end{pmatrix}=d_{n}D
\\
\end{matrix}\] for each $n\in \mathbb{N}^{(5)}$, where $D$ is a
constant matrix. By Claim  \ref{5645}, $|d_{n}|$ tends to infinity
along $\mathbb{N}^{(5)}$. This contradicts (\ref{for946}) and
finishes the proof of the first assertion of Theorem \ref{main4}.
Now assume that
$$\limsup_{n\rightarrow\infty} \mid|A_n|-|A'_n|\mid=+\infty.$$ If
one of $\alpha$ and $\alpha'$ is of degree higher than 2, then
applying the arguments in cases 2 and 3, we get a contradiction.
\end{proof}

 \section{Proof of Theorem \ref{main5}}
As the  proof is similar to that of Theorem \ref{main4}, many
details are omitted.

\begin{proof}[Proof of Theorem \ref{main5}]
Assume that (i'), (ii) and (iii) of Condition \ref{c1} is
satisfied with three sequences of finite nonempty words
$\{A_n\}_{n\geq1}$, $\{A'_n\}_{n\geq1}$, $\{B_n\}_{n\geq1}$. Set
$k_n=|A_n|$, $l_n=|A'_n|$, and $m_n=|B_n|$. Then we have

\[\begin{matrix}\begin{pmatrix}p_{k_{n}}&p_{k_{n}-1}\\q_{k_{n}}&q_{k_{n}-1}\end{pmatrix}\textsl{M}(B_n)=
\begin{pmatrix}p_{k_{n}+m_n}&p_{k_{n}+m_n-1}\\q_{k_{n}+m_n}&q_{k_{n}+m_n-1}\end{pmatrix},\\
\end{matrix}\]

and

\[\begin{matrix}\begin{pmatrix}p'_{l_{n}}&p'_{l_{n}-1}\\q'_{l_{n}}&q'_{l_{n}-1}\end{pmatrix}\textsl{M}(B_n)^T=
\begin{pmatrix}p'_{l_{n}+m_n}&p'_{l_{n}+m_n-1}\\q'_{l_{n}+m_n}&q'_{l_{n}+m_n-1}\end{pmatrix}.\\
\end{matrix}\]
\bigskip
The above two identities immediately implies
{\setlength{\arraycolsep}{0pt}
\begin{eqnarray} \label{for89'}
&&\begin{pmatrix}p_{k_{n}}&p_{k_{n}-1}\\q_{k_{n}}&q_{k_{n}-1}\end{pmatrix}
\begin{pmatrix}p'_{l_{n}+m_n}&q'_{l_{n}+m_n}\\p'_{l_{n}+m_n-1}&q'_{l_{n}+m_n-1}\end{pmatrix} \\
&=&\begin{pmatrix}p_{k_{n}+m_n}&p_{k_{n}+m_n-1}\\q_{k_{n}+m_n}&q_{k_{n}+m_n-1}\end{pmatrix}
\begin{pmatrix}p'_{l_{n}}&q'_{l_{n}}\\p'_{l_{n}-1}&q'_{l_{n}-1}\end{pmatrix}\nonumber
\\
&=&\begin{pmatrix}d_{n}&c_{n}\\ b_{n}&a_{n}\end{pmatrix}.\nonumber
\end{eqnarray}
} Evaluating the  linear forms {\setlength{\arraycolsep}{0pt}
\begin{eqnarray*}
L_{1}(X_1,X_2,X_3,X_4)&=&\alpha \alpha'X_1-\alpha X_2-\alpha'X_3+X_4,\\
L_{2}(X_1,X_2,X_3,X_4)&=&\alpha'X_1-X_2\\
L_{3}(X_1,X_2,X_3,X_4)&=&\alpha X_1-X_3 \\
L_{4}(X_1,X_2,X_3,X_4)&=&X_1
\end{eqnarray*}
} on the quadruple {\setlength{\arraycolsep}{0pt}
\begin{eqnarray*}
&&(a_{n},b_{n},c_{n},d_{n})\\
&=&(q_{k_{n}}q'_{l_{n}+m_n}+q_{k_{n}-1}q'_{l_{n}+m_n-1},q_{k_{n}}p'_{l_{n}+m_n}+q_{k_{n}-1}p'_{l_{n}+m_n-1},\\
&&p_{k_{n}}q'_{l_{n}+m_n}+p_{k_{n}-1}q'_{l_{n}+m_n-1},p_{k_{n}}p'_{l_{n}+m_n}+p_{k_{n}-1}p'_{l_{n}+m_n-1}),
\end{eqnarray*}
} we get
 {\setlength{\arraycolsep}{0pt}
\begin{eqnarray*}
|L_{1}(a_{n},b_{n},c_{n},d_{n})|&\ll&q^{-1}_{k_{n}}q'^{-1}_{l_{n}+m_n},\\
|L_{2}(a_{n},b_{n},c_{n},d_{n})|&\ll&q_{k_{n}}q'^{-1}_{l_{n}+m_n},\\
|L_{4}(a_{n},b_{n},c_{n},d_{n})|&\ll&q_{k_{n}}q'_{l_{n}+m_n}.
\end{eqnarray*}
} By  (\ref{for89'}) we have
 {\setlength{\arraycolsep}{0pt}
\begin{eqnarray*}
&&|L_{3}(a_{n},b_{n},c_{n},d_{n})|\\
&=&|\alpha(q_{k_{n}+m_n}q'_{l_{n}}+q_{k_{n}+m_n-1}q'_{l_{n}-1})-(p_{k_{n}+m_n}q'_{l_{n}}+p_{k_{n}+m_n-1}q'_{l_{n}-1})|\\
&\ll&q^{-1}_{k_{n}+m_n}q'_{l_{n}}.\\
\end{eqnarray*}
}

Now applying Theorem \ref{main8} as before implies
 that there exist a nonzero element
$(x_{1},x_{2},x_{3},x_{4})\in \mathbb{Q}^{4}$, and an infinite
subset $\mathbb{N}'$ of $\mathbb{N}$ such that

\begin{equation}\label{for8y'}x_{1}a_{n}+x_{2}b_{n}+x_{3}c_{n}+x_{4}d_{n}=0,\end{equation} for each $n\in \mathbb{N}'$.

At this point the proof is divided into 3 cases as before.

 Case 1: there exists an infinite subset
$\mathbb{N}''$ of $\mathbb{N}'$ such that both the sequences
$\{k_n\}_{n\in \mathbb{N}''}$, $\{l_n\}_{n\in \mathbb{N}''}$ are
bounded. Without loss of generality, we assume that $k_n=l_n=0$
for each $n\in \mathbb{N}''$. Then by (\ref{for89'}) and
(\ref{for8y'}),  we have {\setlength{\arraycolsep}{0pt}
\begin{eqnarray}\label{xx8y'}
&&x_{1}a_{n}+x_{2}b_{n}+x_{3}c_{n}+x_{4}d_{n}\\
&=&x_{1}p'_{m_n-1}+x_{2}q'_{m_n-1}+x_{3}p'_{m_n-1}+x_{4}q'_{m_n}\nonumber\\
&=&x_{1}p_{m_n-1}+x_{2}p_{m_n}+x_{3}q_{m_n-1}+x_{4}q_{m_n}\nonumber\\
&=&0\nonumber
\end{eqnarray}
} for each $n\in \mathbb{N}''$.

By Lemma \ref{ssTte} and (\ref{xx8y'}), both $\alpha$ and
$\alpha'$ are quadratic irrationals. Assume that the shortest
periods of the continued fraction expansions of $\alpha$ and
$\alpha'$ are $A$ and $B$ respectively. Then  (i') of Condition
\ref{c1} implies that $A$ and $B$ are cycle   mirror symmetry.

 Case 2: there exists an infinite subset
$\mathbb{N}''$ of $\mathbb{N}'$ such that only one of the
sequences $\{k_n\}_{n\in \mathbb{N}''}$, $\{l_n\}_{n\in
\mathbb{N}''}$ is bounded.  Without loss of generality, we assume
that $\{k_n\}_{n\in \mathbb{N}''}$ is bounded. Then there exists
an infinite subset $\mathbb{N}'''$ of $\mathbb{N}''$ and a
nonnegative integer $k$ such that $k_n=k$ for each $n\in
\mathbb{N}'''$. If $\alpha'$ is a quadratic irrational number,
then the continued fraction expansion
$$\alpha'=[[\alpha'];a'_1,a'_2,\cdots]$$ is eventually periodic,
and we can replace $A'_n$ by a prefix of  bounded length  without
violating (i'), (ii) and (iii) of Condition \ref{c1}, and the
proof can be reduced to Case 1. If $\alpha'$ is an algebraic
number of degree higher than 2, then by (\ref{for8y'}) we have
{\setlength{\arraycolsep}{0pt}
\begin{eqnarray}\label{for91}
&&(x_{1}q_{k}+x_{3}p_{k})q'_{l_{n}+m_n-1}-(x_{1}q_{k-1}+x_{3}p_{k-1})q'_{l_{n}+m_n}\\
&+&(x_{2}q_{k}+x_{4}p_{k})p'_{l_{n}+m_n-1}-(x_{2}q_{k-1}+x_{4}p_{k-1})p'_{l_{n}+m_n}\nonumber\\
&=&0 \nonumber
\end{eqnarray}
} for each  $n\in \mathbb{N}'''$. This contradicts Lemma
\ref{ssTte}.

 Case 3: there exists an infinite subset
$\mathbb{N}''$ of $\mathbb{N}'$ such that both the sequences
$\{k_n\}_{n\in \mathbb{N}''}$, $\{l_n\}_{n\in \mathbb{N}''}$ are
strictly increasing. If at least one  of $\alpha$ and $\alpha'$ is
a quadratic irrational number, then the proof can be  reduced to
Case 2. Hence we assume that both $\alpha$ and $\alpha'$ are
algebraic numbers of degree higher than 2. Without loss of
generality we can further assume  that $\alpha,\alpha'\in(0,1)$.
Hence we have $p_{n},p'_{n}>0$ for each $n$. Set $$M_{n}=\max
(|a_{n}|, |b_{n}|,|c_{n}|,|d_{n}|).$$ Then it is easy to see that
$$\lim_{n\rightarrow\infty}
M_{n}=+\infty.$$

Now the rest proof proceeds in the same way as in Case 3 of the
proof of Theorem \ref{main4}.
\end{proof}

\section{proofs of Theorems
\ref{main99} and \ref{main990}} This section is devoted to the
proofs of Theorems \ref{main99} and \ref{main990}.

First we need two auxiliary lemmas, the first of which follows
directly from \cite[Lemma 2.2]{bu1} and its proof.
  \begin{lemma}\label{ss091}
Let $$\alpha=[a_0;a_1,a_2,\cdots],$$ and
$$\beta=[b_0;b_1,b_2,\cdots]$$ be  the continued fraction expansions of two
real numbers, and let $\{\frac{p_{n}}{q_{n}}\}_{n\geq0}$ the
sequence of convergents of $\beta$. Let $n$ be a nonnegative
integer such that $a_i=b_i$ for $i=1,\cdots,n-1$, and $a_{n}\neq
b_{n}$. Then we have $$|\alpha-\beta|\geq
\frac{1}{72q^2_{n}b_{n+1}b_{n+2}}\geq \frac{1}{72q_{n}q_{n+2}} .$$
   \end{lemma}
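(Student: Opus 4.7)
My plan is to exploit the fact that, since the first $n-1$ partial quotients of $\alpha$ and $\beta$ coincide, the convergents $p_k/q_k$ of $\beta$ with $k\leq n-1$ are simultaneously convergents of $\alpha$. This lets me write both numbers in terms of a common pair of convergents together with the respective tails $\alpha_n=[a_n;a_{n+1},\ldots]$ and $\beta_n=[b_n;b_{n+1},\ldots]$:
$$\alpha=\frac{\alpha_np_{n-1}+p_{n-2}}{\alpha_nq_{n-1}+q_{n-2}},\qquad \beta=\frac{\beta_np_{n-1}+p_{n-2}}{\beta_nq_{n-1}+q_{n-2}}.$$
Subtracting and using the identity $p_{n-1}q_{n-2}-p_{n-2}q_{n-1}=(-1)^n$ from (\ref{ss1}) yields the key identity
$$|\alpha-\beta|=\frac{|\alpha_n-\beta_n|}{(\alpha_nq_{n-1}+q_{n-2})(\beta_nq_{n-1}+q_{n-2})},$$
reducing the problem to bounding the right-hand side from below.

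For the denominators, the relation $\beta_n=b_n+1/\beta_{n+1}$ with $\beta_{n+1}\geq 1$ gives $\beta_nq_{n-1}+q_{n-2}=q_n+q_{n-1}/\beta_{n+1}\leq 2q_n$, and analogously $\alpha_nq_{n-1}+q_{n-2}\leq 2Q_n$ where $Q_n=a_nq_{n-1}+q_{n-2}$. Assume without loss of generality $a_n>b_n$; then $Q_n=q_n+(a_n-b_n)q_{n-1}\leq (a_n-b_n+1)q_n$, so the product of denominators is at most $4(a_n-b_n+1)q_n^2$.

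For the numerator I write $\alpha_n-\beta_n=(a_n-b_n)+1/\alpha_{n+1}-1/\beta_{n+1}$ and split on the size of $a_n-b_n$. If $a_n-b_n\geq 2$, the crude bound $|\alpha_n-\beta_n|\geq (a_n-b_n)-1$ already suffices: combined with the denominator estimate it yields $|\alpha-\beta|\geq 1/(12q_n^2)$, and since $b_{n+1}b_{n+2}\geq 1$ this implies the claim. If $a_n-b_n=1$, then $Q_n\leq 2q_n$ and the denominator product is at most $8q_n^2$, so I need the sharper estimate $|\alpha_n-\beta_n|\geq C/(b_{n+1}b_{n+2})$. I would obtain this from $|\alpha_n-\beta_n|\geq 1-1/\beta_{n+1}=(\beta_{n+1}-1)/\beta_{n+1}$ together with $\beta_{n+1}\leq b_{n+1}+1/b_{n+2}$ and the lower bound $\beta_{n+1}\geq b_{n+1}+1/(b_{n+2}+1)$ coming from $\beta_{n+2}\leq b_{n+2}+1$.

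The main obstacle is this last step: extracting the correct $b_{n+1}b_{n+2}$ dependence when the partial quotients differ by exactly one. A short subcase analysis on whether $b_{n+1}\geq 2$ or $b_{n+1}=1$ is required, since in the latter case $\beta_{n+1}-1=1/\beta_{n+2}$ is small and one must use $b_{n+2}$ to recover the correct order. Collecting constants across the cases yields the advertised $72$, and the second inequality of the lemma follows from $q_{n+2}=b_{n+2}q_{n+1}+q_n\geq b_{n+2}b_{n+1}q_n$.
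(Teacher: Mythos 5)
Your argument is essentially correct and complete, and it is worth noting that the paper itself offers no proof of this lemma at all: it simply states that the estimate ``follows directly from [Lemma 2.2, bu1] and its proof,'' i.e.\ it is imported from Bugeaud's paper on the quadratic Lagrange spectrum. So what you have produced is a self-contained elementary derivation of a cited black box. Your key identity
$$|\alpha-\beta|=\frac{|\alpha_n-\beta_n|}{(\alpha_nq_{n-1}+q_{n-2})(\beta_nq_{n-1}+q_{n-2})}$$
is exactly the right reduction, the denominator bounds $\beta_nq_{n-1}+q_{n-2}\leq 2q_n$ and $\alpha_nq_{n-1}+q_{n-2}\leq 2(a_n-b_n+1)q_n$ are valid, and the case split on $a_n-b_n$ (with the subcases $b_{n+1}\geq 2$ and $b_{n+1}=1$) does deliver constants well inside $72$; the final step $q_{n+2}\geq b_{n+1}b_{n+2}q_n$ is also fine. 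The one point you should not wave away is the ``without loss of generality $a_n>b_n$'': the statement is \emph{not} symmetric in $\alpha$ and $\beta$, since the bound is expressed through the convergents $q_n$ and partial quotients $b_{n+1},b_{n+2}$ of $\beta$, so swapping the two numbers changes the target inequality. You must treat $b_n>a_n$ separately; fortunately it is the easier case, because then $\alpha_n<\beta_n$ gives $\alpha_nq_{n-1}+q_{n-2}\leq 2q_n$ directly (denominator $\leq 4q_n^2$), and $\beta_n-\alpha_n\geq 1/\beta_{n+1}\geq 1/(2b_{n+1})$ when $b_n-a_n=1$, which yields $|\alpha-\beta|\geq 1/(8q_n^2b_{n+1}b_{n+2})$. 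With that case written out, the proof is complete.
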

   \begin{lemma}\label{ss09}
Let $$\xi=[a_0;a_1,a_2,\cdots,a_n,\cdots]$$ be the continued
fractional expansion of an irrational real algebraic number, and
let
 $\{p_{n}/q_{n}\}_{n\geq0}$ be the sequence of convergents. Let
$k$ be a positive integer and let $\epsilon$ be a positive number.
Then there can not exist an infinite subset $\mathbb{N}'$ of
$\mathbb{N}$ such that $$q_{n+k}>q_{n}^{1+\epsilon},$$ for each
$n\in \mathbb{N}'$.
   \end{lemma}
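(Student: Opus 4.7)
The plan is to argue by contradiction using Roth's classical theorem applied to convergents. Suppose, contrary to the claim, that there exists an infinite subset $\mathbb{N}' \subset \mathbb{N}$ and a positive integer $k$ and positive $\epsilon$ with
$$q_{n+k} > q_{n}^{1+\epsilon} \quad \text{for every } n \in \mathbb{N}'.$$
Choose $\eta > 0$ small enough that $(1+\eta)^k < 1+\epsilon$; this is the only parameter choice we need.

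Since $\xi$ is an irrational real algebraic number and the convergents $p_n/q_n$ are automatically in lowest terms, Roth's theorem (\textbf{Theorem \ref{main99w}}, in its classical rational form) guarantees that the inequality $|\xi - p/q| < q^{-2-\eta}$ has only finitely many rational solutions $p/q$. Hence there exists $N_0$ such that for all $n \geq N_0$,
$$|\xi - p_n/q_n| \geq q_n^{-2-\eta}.$$
On the other hand, Lemma \ref{ss} gives the upper bound $|\xi - p_n/q_n| < 1/(q_n q_{n+1})$. Combining the two inequalities yields
$$q_{n+1} < q_n^{1+\eta} \quad \text{for all } n \geq N_0.$$

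Now I iterate. For any $n \geq N_0$, applying the one-step bound successively to the indices $n, n+1, \ldots, n+k-1$ gives
$$q_{n+k} < q_{n+k-1}^{1+\eta} < q_{n+k-2}^{(1+\eta)^2} < \cdots < q_n^{(1+\eta)^k}.$$
By the choice of $\eta$, the exponent $(1+\eta)^k$ is strictly less than $1+\epsilon$, so for all sufficiently large $n$ (say $n \geq N_1 \geq N_0$) we obtain $q_{n+k} < q_n^{1+\epsilon}$. But $\mathbb{N}'$ is infinite and therefore contains some element $n \geq N_1$, contradicting the standing assumption that $q_{n+k} > q_n^{1+\epsilon}$ for every $n \in \mathbb{N}'$.

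There is essentially no substantive obstacle here; the proof is a clean pairing of the convergent estimate from Lemma \ref{ss} with Roth's theorem, plus an iteration. The only thing to keep track of is that the exponents compound multiplicatively, which is precisely why $\eta$ must be chosen smaller than a quantity that depends on both $k$ and $\epsilon$ through the relation $(1+\eta)^k < 1+\epsilon$.
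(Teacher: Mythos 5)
Your proof is correct and uses exactly the same ingredients as the paper's: Roth's theorem applied to the convergents, the estimate $|\xi - p_n/q_n| < 1/(q_nq_{n+1})$ from Lemma \ref{ss}, and the choice of an auxiliary exponent with $(1+\eta)^k < 1+\epsilon$. The only difference is the direction of the argument --- you derive a uniform one-step bound $q_{n+1}<q_n^{1+\eta}$ from Roth and compound it, whereas the paper pigeonholes a single index $i_n$ with $q_{n+i_n+1}>q_{n+i_n}^{1+\eta}$ out of the assumed $k$-step growth and feeds that to Roth --- which is just a contrapositive rearrangement of the same proof.
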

\begin{proof}Assume that there exists an infinite subset $\mathbb{N}'$ of
$\mathbb{N}$ such that $$q_{n+k}>q_{n}^{1+\epsilon},$$ for each
$n\in \mathbb{N}'$. Set $1+\varepsilon=\sqrt[k]{1+\epsilon}$. Then
for any  $n\in \mathbb{N}'$, there exists a $0\leq i_n<k $ such
that  $$q_{n+i_n+1}>q_{n+i_n}^{\varepsilon}.$$ Now  we have
$$|\xi-\frac{p_{n+i_n}}{q_{n+i_n}}|<\frac{1}{q_{n+i_n}q_{n+i_n+1}}<\frac{1}{q_{n+i_n}^{2+\varepsilon}},$$ for each  $n\in
\mathbb{N}'$.
This contradicts Roth's theorem \cite{ro}.
\end{proof}

\begin{proof}[Proof of Theorem \ref{main99}]
The proof is divided into several claims. Assume that  there exist
a sequence $\{\beta_{n}\}_{n\geq0}$ of distinct elements from
$\Theta_{\alpha}$ such that
\begin{equation}|\xi-\beta_{n}|<\|\beta_{n}\|^{-1-\epsilon}.\end{equation}
\bigskip
 \begin{claim}\label{zawdremvf}
$$\lim_{n\rightarrow\infty}
\|\beta_{n}\|=+\infty,$$ and $$\lim_{n\rightarrow\infty}
\beta_{n}=\xi.$$
\end{claim}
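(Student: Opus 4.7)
The plan is to prove the first assertion, $\|\beta_n\|\to\infty$, by contradiction, and then to deduce $\beta_n\to\xi$ as an immediate corollary of the given inequality $|\xi-\beta_n|<\|\beta_n\|^{-1-\epsilon}$.

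Suppose toward a contradiction that $\|\beta_n\|$ does not tend to infinity. Passing to a subsequence I may assume $\|\beta_n\|\leq M$ for some fixed constant $M$. Writing $\beta_n=A_n\alpha$ with $A_n=\begin{pmatrix}a_n&b_n\\c_n&d_n\end{pmatrix}\in PSL(2,\mathbb{Z})$, the definition $\|\beta_n\|=\max(|c_n|,|d_n|)$ forces $|c_n|,|d_n|\leq M$. Only finitely many coprime pairs $(c,d)$ satisfy this bound (after normalizing the sign of $(c,d)$ to resolve the $\{\pm I\}$ identification in $PSL$), so by pigeonhole I can pass to a further subsequence along which $(c_n,d_n)$ is a fixed pair $(c,d)$.

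For this fixed $(c,d)$ with $\gcd(c,d)=1$, all integer solutions of $ad-bc=1$ take the shape $(a,b)=(a_0+kc,\,b_0+kd)$, $k\in\mathbb{Z}$, so a short computation with the M\"obius action yields
\begin{equation*}
\beta_n \;=\; \frac{(a_0+k_nc)\alpha+(b_0+k_nd)}{c\alpha+d} \;=\; \frac{a_0\alpha+b_0}{c\alpha+d}+k_n.
\end{equation*}
Thus the $\beta_n$ differ from a fixed real number $\beta^{*}$ by integers $k_n$, and distinctness of the $\beta_n$ forces distinctness of the $k_n$. On the other hand $|\xi-\beta_n|<\|\beta_n\|^{-1-\epsilon}\leq 1$ confines each $\beta_n$, and hence each $k_n$, to a bounded interval around $\xi$, so only finitely many $k_n$ are possible---a contradiction. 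Once the first assertion is in hand, the inequality $|\xi-\beta_n|<\|\beta_n\|^{-1-\epsilon}\to 0$ yields $\beta_n\to\xi$ at once.

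I do not anticipate any real obstacle here: the argument is essentially pigeonhole, combined with the elementary description of the fibers of the map $A\mapsto(c,d)$ on $PSL(2,\mathbb{Z})$. The only minor point requiring care is the sign ambiguity from the $\{\pm I\}$ quotient, which is handled by fixing a canonical representative of each matrix class before doing the counting.
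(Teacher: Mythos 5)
Your proof is correct and follows essentially the same route as the paper's: pass to a subsequence on which the bottom row $(c,d)$ is fixed, use coprimality and the determinant condition to see that distinct $\beta_n$ in that fiber differ by nonzero integers, and contradict the fact that all $\beta_n$ lie within distance $1$ of $\xi$. The paper phrases this via $|\beta_{n_1}-\beta_{n_2}|\geq 1$ rather than the explicit parametrization $\beta_n=\beta^*+k_n$, but the content is identical.
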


\begin{proof}
Otherwise, we can choose an infinite subset
$\mathbb{N}'$ of $\mathbb{N}$ such that  \[\begin{matrix}\beta_{n}=\begin{pmatrix}a_n&b_n\\
c&d\end{pmatrix}\alpha,
\\
\end{matrix}\] and the determinent   \[\begin{matrix}\begin{vmatrix}a_n&b_n\\
c&d\end{vmatrix},
\end{matrix}\] is fixed when  $n\in \mathbb{N}'$£¬ where $c$ and $d$ are constants. For distinct $\beta_{n_1} $ and $\beta_{n_2}$, we have
  \[\begin{matrix} |\beta_{n_1}-\beta_{n_2}|=|\begin{pmatrix}a_{n_1}-a_{n_2}&b_{n_1}-b_{n_2}\\
c&d\end{pmatrix}\alpha|.
\\
\end{matrix}\] We note that $c$ and $d$ are co-prime and   \[\begin{matrix} \begin{vmatrix}a_{n_1}-a_{n_2}&b_{n_1}-b_{n_2}\\
c&d\end{vmatrix}=0.
\\
\end{matrix}\]  Hence  \[\begin{matrix} \begin{pmatrix}a_{n_1}-a_{n_2}&b_{n_1}-b_{n_2}\\
c&d\end{pmatrix}\alpha
\\
\end{matrix}\] is a nonzero integer and $|\beta_{n_1}-\beta_{n_2}|\geq1$. This contradicts the fact that \begin{equation}|\xi-\beta_{n}|<\|\beta_{n}\|^{-1-\epsilon}\end{equation} and that $\{\beta_{n}\}_{n\geq0}$ consists of distinct elements.
\end{proof}

From now on, we assume without loss of generality that
$\alpha,\beta_{n}$ and $\xi$ all lie in the interval $(0,1)$. Now
by Lemma \ref{xxx}, we can assume that
$$\alpha=[0,a_1,a_2,\cdots,a_{k_n-1},a_{k_n},a_{k_n+1},\cdots ],$$
and
$$\beta_{n}=[0,a^{(n)}_1,a^{(n)}_2,\cdots,a^{(n)}_{l_n},a_{k_n+1},a_{k_n+2},\cdots ],$$ where $k_n,l_n\geq0$ and $a^{(n)}_{l_n}\neq a_{k_n}$.

Let $\{p_{k}/q_{k}\}_{k\geq1}$ and
$\{p^{(n)}_{k}/q^{(n)}_{k}\}_{k\geq1}$ be  respectively the
sequence of convergents of $\alpha$ and $\beta_{n}$. Then we have

\begin{equation}\label{cdremvf}\beta_{n}=\frac{(p^{(n)}_{l_n}q_{k_{n}-1}-p^{(n)}_{l_n-1}q_{k_{n}})\alpha-(p^{(n)}_{l_n}p_{k_{n}-1}-p^{(n)}_{l_n-1}p_{k_{n}})}
{(q^{(n)}_{l_n}q_{k_{n}-1}-q^{(n)}_{l_n-1}q_{k_{n}})\alpha-(q^{(n)}_{l_n}p_{k_{n}-1}-q^{(n)}_{l_n-1}p_{k_{n}})}.\end{equation}

 \begin{claim}
$$\lim_{n\rightarrow\infty}
l_n+k_{n}=+\infty.$$
\end{claim}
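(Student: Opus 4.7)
The plan is to argue by contradiction: if $l_n+k_n$ failed to tend to infinity, one could extract a subsequence on which $\beta_n$ is eventually equal to a single real number, contradicting either the distinctness of the $\beta_n$ or the hypothesis $\xi\notin\Theta_{\alpha}$.

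Suppose then that there is an infinite subset $\mathbb{N}'\subset\mathbb{N}$ along which $l_n+k_n$ is bounded. Since $l_n,k_n\geq0$, both sequences are bounded along $\mathbb{N}'$, and by pigeonhole I may pass to an infinite subset $\mathbb{N}''\subset\mathbb{N}'$ on which $l_n=l$ and $k_n=k$ are constant nonnegative integers. For every $n\in\mathbb{N}''$, the continued fraction of $\beta_n$ then reads
$$\beta_n=[0;a^{(n)}_1,\ldots,a^{(n)}_l,a_{k+1},a_{k+2},\cdots],$$
with $a^{(n)}_l\neq a_k$; the only $n$-dependent data are the first $l$ partial quotients $a^{(n)}_1,\ldots,a^{(n)}_l$.

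Next I would invoke Claim \ref{zawdremvf}, which gives $\beta_n\rightarrow\xi$. Writing $\xi=[0;a^{\xi}_1,a^{\xi}_2,\cdots]$, the fact that $\xi$ is irrational implies that for each fixed $j\geq1$ the set of real numbers in $(0,1)$ whose first $j$ partial quotients are $a^{\xi}_1,\ldots,a^{\xi}_j$ is an interval containing $\xi$ in its interior. Consequently, $\beta_n\rightarrow\xi$ forces $a^{(n)}_j=a^{\xi}_j$ for every $j\in\{1,\ldots,l\}$ as soon as $n\in\mathbb{N}''$ is sufficiently large. Hence, for all such $n$, $\beta_n$ equals the single fixed real number
$$\beta^*:=[0;a^{\xi}_1,\ldots,a^{\xi}_l,a_{k+1},a_{k+2},\cdots].$$

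This immediately contradicts the distinctness of the $\beta_n$, finishing the proof; equivalently, one obtains $\beta^*=\lim_{n\in\mathbb{N}''}\beta_n=\xi$ and $\beta^*\in\Theta_{\alpha}$, so that $\xi\in\Theta_{\alpha}$, contrary to the standing hypothesis. The one mildly delicate point is the continuity of partial quotients at irrational numbers used in the previous paragraph, but this is a standard property of the Gauss map and is the real engine of the argument.
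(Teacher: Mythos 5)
Your argument is correct and is essentially the paper's own proof: both pass to a subsequence where $k_n=k$ and $l_n=l$ are constant, use $\beta_n\to\xi$ (Claim \ref{zawdremvf}) to conclude that the initial partial quotients $a^{(n)}_1,\ldots,a^{(n)}_l$ stabilize, and derive the contradiction $\xi\in\Theta_\alpha$ (or, equivalently, the failure of distinctness of the $\beta_n$). The cylinder-interval justification you supply is exactly the standard fact the paper leaves implicit.
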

\begin{proof}
Otherwise there would exist an infinite subset $\mathbb{N}'$ of
$\mathbb{N}$ and two constants $k$ and $l$  such that $l_n=l$ and
$k_{n}=k$ for each $n\in \mathbb{N}'$. As
$$\lim_{n\rightarrow\infty} \beta_{n}=\xi,$$
$a^{(n)}_1,a^{(n)}_2,\cdots,a^{(n)}_{l}$ will be fixed when $n\in
\mathbb{N}'$ is sufficiently large. This implies
$\xi\in\Theta_{\alpha}$ which contradicts our assumption.
\end{proof}

Let $\epsilon_{1}$ be another positive number.
 \begin{claim}\label{cdrdeem1111}
There exists an infinite subset $\mathbb{N}'$ of $\mathbb{N}$ such
that $\|\beta_{n}\|\geq
(\overline{q^{(n)}_{l_n-1}}q_{k_{n}})^{1-\epsilon_{1}}$ for each
$n\in \mathbb{N}'$, where $\overline{x}=\max(1,x).$
\end{claim}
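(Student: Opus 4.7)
Since $\beta_n\in\Theta_\alpha$, write $\beta_n=A_n\alpha$ for the unique $A_n\in PSL(2,\mathbb{Z})$ with bottom row $(c_n,d_n)$, so that $\|\beta_n\|=\max(|c_n|,|d_n|)\geq|c_n|$. Reading off the denominator in~(\ref{cdremvf}) gives, up to sign,
$$c_n = q^{(n)}_{l_n}q_{k_n-1} - q^{(n)}_{l_n-1}q_{k_n} = q^{(n)}_{l_n-1}q_{k_n-1}\,(\delta_n - \gamma_n),$$
where $\gamma_n := q_{k_n}/q_{k_n-1}$ and $\delta_n := q^{(n)}_{l_n}/q^{(n)}_{l_n-1}$. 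By identity~(\ref{ss12}), $\gamma_n=[a_{k_n};a_{k_n-1},\ldots,a_1]$ and $\delta_n=[a^{(n)}_{l_n};a^{(n)}_{l_n-1},\ldots,a^{(n)}_1]$, which disagree in the zeroth partial quotient because $a_{k_n}\neq a^{(n)}_{l_n}$.

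The key estimate is an application of Lemma~\ref{ss091} taking $\gamma_n$ to play the role of the ``$\beta$'' there, so that the partial quotients appearing in the lower bound come from the algebraic number $\alpha$ and can afterwards be controlled by Lemma~\ref{ss09}. The case $n=0$ of Lemma~\ref{ss091} then yields $|\gamma_n-\delta_n|\geq 1/(72\,a_{k_n-1}\,a_{k_n-2})$, and from the convergent recurrences for $\alpha$ one has $a_{k_n-1}\leq q_{k_n-1}/q_{k_n-2}$ and $a_{k_n-2}\leq q_{k_n-2}/q_{k_n-3}$, whence $a_{k_n-1}a_{k_n-2}\leq q_{k_n-1}/q_{k_n-3}$. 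Combining,
$$|c_n| \geq \frac{q^{(n)}_{l_n-1}\,q_{k_n-3}}{72}.$$

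Finally, since $\alpha$ is algebraic, Lemma~\ref{ss09} applied with $k=3$ and a small $\eta>0$ guarantees $q_{k_n}\leq q_{k_n-3}^{1+\eta}$ for all but finitely many $n$ in any subsequence where $k_n\to\infty$, i.e.\ $q_{k_n-3}\geq q_{k_n}^{1-\eta'}$ with $\eta':=\eta/(1+\eta)$. Choosing $\eta$ small enough that $\eta'<\epsilon_1/2$ and using that $q^{(n)}_{l_n-1}q_{k_n}\to\infty$ to absorb the constant $1/72$, one concludes $\|\beta_n\|\geq|c_n|\geq (q^{(n)}_{l_n-1}q_{k_n})^{1-\epsilon_1}$ for infinitely many $n$, giving the claim. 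The principal obstacle is the boundary regime $k_n\leq 2$ or $l_n\leq 1$, where $a_{k_n-1}$ or $a_{k_n-2}$ is not well defined and Lemma~\ref{ss091} does not apply in the form above; but then $q_{k_n}$ or $\overline{q^{(n)}_{l_n-1}}$ is bounded, and the claim follows by direct inspection of the formula for $c_n$ (for instance, $|c_n|=q^{(n)}_{l_n-1}$ when $k_n=0$, making the desired bound immediate).
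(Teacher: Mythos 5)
Your proposal is correct and follows essentially the same route as the paper's proof (its Case 3): both lower-bound $|q^{(n)}_{l_n}q_{k_n-1}-q^{(n)}_{l_n-1}q_{k_n}|$ by applying Lemma \ref{ss091} at the zeroth partial quotient of the reversed continued fractions $[a_{k_n};a_{k_n-1},\ldots,a_1]$ and $[a^{(n)}_{l_n};\ldots,a^{(n)}_1]$, replace $a_{k_n-1}a_{k_n-2}$ by $q_{k_n-1}/q_{k_n-3}$, and invoke Lemma \ref{ss09} to get $q_{k_n-3}\geq q_{k_n}^{1-\eta'}$. The paper treats the degenerate situations as separate cases (bounded $k_n$; $l_n=0$), which you compress into a boundary remark --- just make sure the subcase where $k_n$ is bounded but at least $3$ is also covered (there $q_{k_n}$ stays bounded while $q^{(n)}_{l_n-1}\to\infty$, so the desired inequality is immediate from $|c_n|\geq q^{(n)}_{l_n-1}q_{k_n-3}/72$).
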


\begin{proof} The proof is divided into three cases.

Case 1, there exists an infinite subset $\mathbb{N}'$ of
$\mathbb{N}$ such that $k_{n}=k$ is a constant for $n\in
\mathbb{N}'$. Then by (\ref{cdremvf}) there exist a positive
constant $M$ such that $$\|\beta_{n}\|\geq M q^{(n)}_{l_n-1}\geq
(\overline{q^{(n)}_{l_n-1}}q_{k_{n}})^{1-\epsilon_{1}}$$ when
$n\in \mathbb{N}'$ is sufficiently large.

Case 2, there exists an infinite subset $\mathbb{N}'$ of
$\mathbb{N}$ such that  $l_n=0$ for each $n\in \mathbb{N}'$. Then
by (\ref{cdremvf}) we have
$$\|\beta_{n}\|\geq q_{k_{n}-1}$$
when $n\in \mathbb{N}'$.
On the other hand, by Lemma \ref{ss09}, we have
$$q_{k_{n}-1}\geq
(\overline{q^{(n)}_{l_n-1}}q_{k_{n}})^{1-\epsilon_{1}},$$ when
$n\in \mathbb{N}'$ is sufficiently large.

Case 3, there exists an infinite subset $\mathbb{N}'$ of
$\mathbb{N}$ such that the sequence $\{k_{n}\}_{n\in \mathbb{N}'}$
is strictly increasing and $l_n>0$ for each $n\in \mathbb{N}'$.
Then by (\ref{cdremvf}), it suffices to show that
$$|\tfrac{q^{(n)}_{l_n}q_{k_{n}-1}}{q^{(n)}_{l_n-1}q_{k_{n}}}-1|>(q^{(n)}_{l_n-1}q_{k_{n}})^{-\epsilon_{1}}.$$
By (\ref{ss12}),
$\frac{q^{(n)}_{l_n}q_{k_{n}-1}}{q^{(n)}_{l_n-1}q_{k_{n}}}$ is the
quotient of  the two continued fractions
$\overline{\beta_{n}}=[a^{(n)}_{l_n};a^{(n)}_{l_n-1},\cdots,a^{(n)}_1]$
and $\overline{\alpha}=[a_{k_n};a_{k_n-1},\cdots,a_1]$.  By Lemma
\ref{ss09}, we have
\begin{equation}\label{cdrdeemcsz}\tfrac{q_{k_{n}-3}}{q_{k_{n}}}\geq q_{k_{n}}^{-\epsilon_{1}},\end{equation}
when $n\in \mathbb{N}''$ is sufficiently large. Now it follows
from Lemma \ref{ss091} and (\ref{cdrdeemcsz}) that
{\setlength{\arraycolsep}{0pt}
\begin{eqnarray} \label{fofdhr67}
&&|\tfrac{q^{(n)}_{l_n}q_{k_{n}-1}}{q^{(n)}_{l_n-1}q_{k_{n}}}-1|\\
&=&\frac{1}{\overline{\alpha}}|\overline{\alpha}-\overline{\beta_{n}}|\nonumber \\
&\geq&\frac{q_{k_{n}-1}}{72q_{k_{n}}a_{k_n-1}a_{k_n-2}}\nonumber\\
&\geq&\frac{q_{k_{n}-3}}{72q_{k_{n}}}\geq q_{k_{n}}^{-\epsilon_{1}}\nonumber\\
&\geq&(q^{(n)}_{l_n-1}q_{k_{n}})^{-\epsilon_{1}},\nonumber
\end{eqnarray}
} when $n\in \mathbb{N}''$ is sufficiently large.
\end{proof}

Fix an infinite subset $\mathbb{N}'$ of $\mathbb{N}$ satisfying
Claim \ref{cdrdeem1111}. Let
$$\xi=[0,b_1,b_2,b_{3},\cdots ]$$ and let
$\{p'_{k}/q'_{k}\}_{k\geq1}$  be the sequence of convergents.

 \begin{claim}\label{cdrdeem11}
When $n\in \mathbb{N}'$ is sufficiently large, we
have $b_{s}=a^{(n)}_s$ for $1\leq s\leq l_n$. Moreover, when $n\in
\mathbb{N}'$ is sufficiently large, let $m_n$ be the nonnegative
integer such that $b_{l_n+s}=a_{k_n+s}$ for $0\leq s\leq m_n$ and
$b_{l_n+m_n+1}\neq a_{k_n+m_n+1}$. Then
$$\lim_{n\rightarrow\infty} m_n=+\infty.$$
\end{claim}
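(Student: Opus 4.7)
The plan is to use Lemma \ref{ss091} as a quantitative obstruction to disagreement between the continued fractions of $\xi$ and $\beta_n$. Let $t_n$ denote the smallest index where these two sequences disagree (with $t_n=+\infty$ if they never do), and write $Q^{(\beta_n)}_k$ for the $k$-th convergent denominator of $\beta_n$. Lemma \ref{ss091} gives $|\xi-\beta_n|\ge 1/(72\,Q^{(\beta_n)}_{t_n}Q^{(\beta_n)}_{t_n+2})$; combining with the hypothesis $|\xi-\beta_n|<\|\beta_n\|^{-1-\epsilon}$ and the lower bound of Claim \ref{cdrdeem1111} yields
\[
Q^{(\beta_n)}_{t_n}\,Q^{(\beta_n)}_{t_n+2}\;\geq\;\tfrac{1}{72}\bigl(\overline{q^{(n)}_{l_n-1}}\,q_{k_n}\bigr)^{(1+\epsilon)(1-\epsilon_1)}\qquad(\ast)
\]
for $n\in\mathbb{N}'$. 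Because $\epsilon_1$ is at our disposal, we may assume the exponent equals $1+\eta$ for some fixed $\eta>0$. The two assertions of the claim amount to $t_n\geq l_n+1$ and to $t_n=l_n+m_n+1$ with $m_n\to\infty$, respectively.

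To exploit $(\ast)$ I first express $Q^{(\beta_n)}_{l_n+j}$ in terms of data I already control. Because $\beta_n$'s partial quotients past position $l_n$ coincide with $\alpha$'s past position $k_n$, the matrix identities of Section~2 give
\[
\begin{pmatrix} * & *\\ Q^{(\beta_n)}_{l_n+j} & *\end{pmatrix}=\begin{pmatrix} p^{(n)}_{l_n} & p^{(n)}_{l_n-1}\\ q^{(n)}_{l_n} & q^{(n)}_{l_n-1}\end{pmatrix}\begin{pmatrix} p_{k_n} & p_{k_n-1}\\ q_{k_n} & q_{k_n-1}\end{pmatrix}^{-1}\begin{pmatrix} p_{k_n+j} & p_{k_n+j-1}\\ q_{k_n+j} & q_{k_n+j-1}\end{pmatrix},
\]
from which $Q^{(\beta_n)}_{l_n+j}\asymp q^{(n)}_{l_n}\,q_{k_n+j}/q_{k_n}$ with implicit constants depending only on $\alpha$. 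For Part~1, I suppose for contradiction that $t_n\leq l_n$ along an infinite subsequence; then $Q^{(\beta_n)}_{t_n}\leq q^{(n)}_{l_n}$ and $Q^{(\beta_n)}_{t_n+2}\ll q^{(n)}_{l_n}\,q_{k_n+2}/q_{k_n}$, reducing $(\ast)$ to $(q^{(n)}_{l_n})^{2}\,q_{k_n+2}/q_{k_n}\gg (q^{(n)}_{l_n-1}\,q_{k_n})^{1+\eta}$. Applying Lemma \ref{ss09} to $\alpha$ with $k=2$ on a further refinement bounds $q_{k_n+2}\leq q_{k_n}^{1+\eta/4}$, which upon comparison contradicts the displayed inequality, yielding $t_n\geq l_n+1$.

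For Part~2, put $t_n=l_n+m_n+1$ and substitute the matrix estimate into $(\ast)$ to get
\[
(q^{(n)}_{l_n})^{2}\,q_{k_n+m_n+1}\,q_{k_n+m_n+3}/q_{k_n}^{2}\;\gg\;(q^{(n)}_{l_n-1}\,q_{k_n})^{1+\eta}.
\]
If $m_n$ were bounded along some sub-subsequence, a further refinement together with Lemma \ref{ss09} applied to $\alpha$ with $k=m_n+3$ would give $q_{k_n+m_n+3}/q_{k_n}\leq q_{k_n}^{\eta/4}$, reducing the situation to the variant handled in Part~1 and producing the same contradiction; hence $m_n\to\infty$. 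The principal obstacle is that the ratio $q^{(n)}_{l_n}/q^{(n)}_{l_n-1}=a^{(n)}_{l_n}+O(1)$ is a priori unbounded since the $\beta_n$'s vary, and Lemma \ref{ss09} applies only to a single fixed algebraic number. The resolution is a diagonal extraction on which either this ratio stays bounded (so the estimates go through verbatim) or $a^{(n)}_{l_n}$ grows so rapidly that $\|\beta_n\|$ substantially exceeds $\overline{q^{(n)}_{l_n-1}}\,q_{k_n}$, strengthening Claim \ref{cdrdeem1111} and again closing the argument.
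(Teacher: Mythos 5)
There is a genuine gap, and it comes from your choice of which number plays the role of $\beta$ in Lemma \ref{ss091}. You take the convergent denominators of $\beta_n$, then convert them via the matrix identity into quantities built from $q^{(n)}_{l_n}$ and $q_{k_n+j}/q_{k_n}$, and claim that the resulting inequality $(q^{(n)}_{l_n})^{2}\,q_{k_n+2}/q_{k_n}\gg (q^{(n)}_{l_n-1}\,q_{k_n})^{1+\eta}$ is contradictory. It is not: the left-hand side carries two powers of $q^{(n)}_{l_n}\geq q^{(n)}_{l_n-1}$ against only $1+\eta$ on the right, and (after Lemma \ref{ss09}) essentially no positive power of $q_{k_n}$ on the left against $1+\eta$ on the right. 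Since nothing prevents $q^{(n)}_{l_n-1}$ from being enormous compared with $q_{k_n}$, the inequality reduces to something like $(q^{(n)}_{l_n-1})^{1-\eta}\gg q_{k_n}^{1+O(\eta)}$, which can perfectly well hold. So Part 1 produces no contradiction even in the ``bounded $a^{(n)}_{l_n}$'' branch, and Part 2 fails for the same reason; this is a separate and more fundamental problem than the unbounded-partial-quotient obstacle you flag at the end (whose proposed resolution by ``diagonal extraction'' is in any case only asserted, not proved — large $a^{(n)}_{l_n}$ need not inflate $\|\beta_n\|$ beyond $q^{(n)}_{l_n-1}q_{k_n}$ unless it also dominates $a_{k_n}$).

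The paper's proof avoids all of this by applying Lemma \ref{ss091} with $\xi$ in the role of $\beta$, so the lower bound reads $|\xi-\beta_n|\geq 1/(72\,q'_{t_n}q'_{t_n+2})$ in terms of the convergents of the single fixed number $\xi$ (assumed algebraic for contradiction). Since the first $t_n-1$ partial quotients of $\xi$ and $\beta_n$ agree and $t_n\leq l_n$, one has $q^{(n)}_{l_n-1}\geq q^{(n)}_{t_n-1}=q'_{t_n-1}$, whence $\|\beta_n\|^{-(2+\epsilon)}\leq (q'_{t_n-1})^{-(2+\epsilon)(1-\epsilon_1)}$; choosing $\epsilon_1$ so that $(2+\epsilon)(1-\epsilon_1)>2$ and invoking Lemma \ref{ss09} for $\xi$ (which forces $q'_{t_n}q'_{t_n+2}\leq (q'_{t_n-1})^{2+o(1)}$) gives the contradiction, using Claim \ref{zawdremvf} to guarantee $t_n\to\infty$ — a step your write-up also omits but needs. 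Note finally that your exponent $(1+\epsilon)(1-\epsilon_1)=1+\eta$ (taken from the typo in the displayed assumption at the start of the paper's proof; Theorem \ref{main99} has $\|\beta\|^{-2-\epsilon}$) is below $2$, so no argument of this shape can close with it: the whole mechanism hinges on beating the exponent $2$ in $q'_{t_n}q'_{t_n+2}\approx (q'_{t_n-1})^{2}$.
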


\begin{proof}Assume that there exists a positive integer $t_n< l_n$ such that $b_{s}=a^{(n)}_s$ for $1\leq s< t_n$ and $b_{t_n}\neq
a^{(n)}_{t_n}$. Then by Claim \ref{zawdremvf} we have
$$\lim_{n\rightarrow\infty} t_n=+\infty.$$ Now by Lemma \ref{ss091}
and Claim \ref{cdrdeem1111},
\begin{equation}\label{cdrdeem}
\frac{1}{72q'_{t_n}q'_{t_n+2}}\leq|\xi-\beta_{n}|\leq\|\beta_{n}\|^{-2-\epsilon}\leq
(q^{(n)}_{l_n-1}q_{k_{n}})^{(-2-\epsilon)(1-\epsilon_{1})}\leq
q'^{(-2-\epsilon)(1-\epsilon_{1})}_{t_n-1},\end{equation}  when
$n\in \mathbb{N}'$. We can choose $\epsilon_{1}$ such that
$(-2-\epsilon)(1-\epsilon_{1})<-2$. But then, by Lemma \ref{ss09},
(\ref{cdrdeem}) is impossible when $n$ is sufficiently large. The
same proof shows that
$$\lim_{n\rightarrow\infty} m_n=+\infty.$$
 \end{proof}

From now on, we always assume that $n$ lies in $\mathbb{N}'$ and
is sufficiently large.

 \begin{claim}\label{cdrdeem11fre}There exist two positive numbers $\delta$ and
 $L$ such that
$$(q_{k_{n}}q'_{l_{n}})^{1+\delta}< L q_{k_{n}+m_n}q'_{l_{n}+m_n}.$$
\end{claim}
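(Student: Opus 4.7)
My plan is to deduce Condition \ref{cfggh1} from (a) the long-matching-prefix lower bound on $|\xi-\beta_{n}|$ supplied by Lemma \ref{ss091}, (b) the hypothesis $|\xi-\beta_{n}|<\|\beta_{n}\|^{-2-\epsilon}$, (c) the $\|\beta_{n}\|$-bound of Claim \ref{cdrdeem1111}, and (d) Lemma \ref{ss09} applied to $\alpha$ (algebraic of degree $>2$) to control the tail growth of $\alpha$'s convergents. By Claim \ref{cdrdeem11} the continued fractions of $\xi$ and $\beta_{n}$ coincide on positions $1,\ldots,l_{n}+m_{n}$ and first differ at position $l_{n}+m_{n}+1$, and the partial quotients of $\beta_{n}$ past position $l_{n}$ are inherited from $\alpha$'s tail; so Lemma \ref{ss091} furnishes
\[
|\xi-\beta_{n}|\ \geq\ \frac{1}{72\,(q^{(n)}_{l_{n}+m_{n}+1})^{2}\,a_{k_{n}+m_{n}+2}\,a_{k_{n}+m_{n}+3}}.
\]

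Next, using $q^{(n)}_{l_{n}+m_{n}+1}\leq 2a_{k_{n}+m_{n}+1}q^{(n)}_{l_{n}+m_{n}}$ together with $a_{k_{n}+m_{n}+j}\leq q_{k_{n}+m_{n}+j}/q_{k_{n}+m_{n}+j-1}$ for $j=1,2,3$, this inequality becomes
\[
(q^{(n)}_{l_{n}+m_{n}})^{2}\,\frac{q_{k_{n}+m_{n}+1}\,q_{k_{n}+m_{n}+3}}{q_{k_{n}+m_{n}}^{2}}\ \gtrsim\ \|\beta_{n}\|^{2+\epsilon}.
\]
By Lemma \ref{ss09} applied to $\alpha$, for any $\epsilon_{2}>0$ and $n$ large one has $q_{k_{n}+m_{n}+1}q_{k_{n}+m_{n}+3}\leq q_{k_{n}+m_{n}}^{2+2\epsilon_{2}}$. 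Substituting this, Claim \ref{cdrdeem1111}'s bound $\|\beta_{n}\|\geq(q'_{l_{n}-1}q_{k_{n}})^{1-\epsilon_{1}}$, and the identity $q^{(n)}_{l_{n}+m_{n}}=q'_{l_{n}+m_{n}}$ (as $\xi$ and $\beta_{n}$ share their first $l_{n}+m_{n}$ partial quotients) yields
\[
(q'_{l_{n}+m_{n}})^{2}\,q_{k_{n}+m_{n}}^{2\epsilon_{2}}\ \gtrsim\ (q'_{l_{n}-1}\,q_{k_{n}})^{(2+\epsilon)(1-\epsilon_{1})}.
\]

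To convert this into Condition \ref{cfggh1} I would invoke the matrix identity
\[
\textsl{M}(a_{k_{n}+1},\ldots,a_{k_{n}+m_{n}})=\begin{pmatrix}A&B\\C&D\end{pmatrix},
\]
which gives $q_{k_{n}+m_{n}}=q_{k_{n}}A+q_{k_{n}-1}C$ and $q'_{l_{n}+m_{n}}=q'_{l_{n}}A+q'_{l_{n}-1}C$ with $A\geq C\geq 1$, hence $q_{k_{n}+m_{n}}q'_{l_{n}+m_{n}}\asymp A^{2}q_{k_{n}}q'_{l_{n}}$; Condition \ref{cfggh1} thus reduces to $A^{2}\gtrsim(q_{k_{n}}q'_{l_{n}})^{\delta}$ for some $\delta>0$. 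The main obstacle is the gap between $q'_{l_{n}-1}$ (from Claim \ref{cdrdeem1111}) and $q'_{l_{n}}$ (appearing in Condition \ref{cfggh1}), governed by the partial quotient $a^{(n)}_{l_{n}}$ at the ``join'' position. When $a^{(n)}_{l_{n}}$ is small relative to $q'_{l_{n}}$, the slack built into the exponent $(2+\epsilon)(1-\epsilon_{1})>2$ absorbs the gap and, after suitably small choice of $\epsilon_{1},\epsilon_{2}$, delivers $A^{2}\gtrsim(q_{k_{n}}q'_{l_{n}})^{\delta}$. When $a^{(n)}_{l_{n}}$ is comparably large, I expect to refine Claim \ref{cdrdeem1111}'s bound by directly inspecting the integer $c_{n}=q^{(n)}_{l_{n}}q_{k_{n}-1}-q^{(n)}_{l_{n}-1}q_{k_{n}}$ (the lower-left entry of the matrix realizing $\beta_{n}=A\alpha$): in this regime $|a^{(n)}_{l_{n}}-a_{k_{n}}|$ is necessarily large, so $|c_{n}|\gtrsim q'_{l_{n}}q_{k_{n}-1}$, and combining this with Lemma \ref{ss09} at position $k_{n}$ (which gives $q_{k_{n}-1}\geq q_{k_{n}}^{1-\epsilon'}$) restores a lower bound $\|\beta_{n}\|\gtrsim q'_{l_{n}}q_{k_{n}}^{1-\epsilon'}$ that closes the argument in this case as well.
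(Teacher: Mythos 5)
Your skeleton is essentially the paper's: the paper likewise combines the Lemma \ref{ss091} lower bound for $|\xi-\beta_n|$ at the first discrepancy (position $l_n+m_n+1$), the hypothesis $|\xi-\beta_n|<\|\beta_n\|^{-2-\epsilon}$ together with Claim \ref{cdrdeem1111}, Lemma \ref{ss09} to tame the extra convergents picked up from Lemma \ref{ss091}, and the matrix identity $q_{k_n+m_n}=q_{k_n}p''_{m_n}+q_{k_n-1}q''_{m_n}$, $q'_{l_n+m_n}=q'_{l_n}p''_{m_n}+q'_{l_n-1}q''_{m_n}$ (the paper's (\ref{cdrdeemzrea})) to transfer growth between the two expansions. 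The only substantive divergence is where you also correctly locate the remaining difficulty: passing from $q'_{l_n-1}$ (which Claim \ref{cdrdeem1111} controls) to $q'_{l_n}$ (which appears in the conclusion).

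Your treatment of that last step has a gap. In the regime where $a^{(n)}_{l_n}$ is large you assert that $|a^{(n)}_{l_n}-a_{k_n}|$ is ``necessarily large''; as written this is unjustified, since $a_{k_n}$ could a priori be of comparable size, and the ensuing bound $|c_n|\gtrsim q'_{l_n}q_{k_n-1}$ depends on it. (It can be repaired: Lemma \ref{ss09} applied to $\alpha$ with $k=1$ gives $a_{k_n}\le q_{k_n}/q_{k_n-1}\le q_{k_n-1}^{\epsilon'}$ for large $n$, which forces $a_{k_n}\le a^{(n)}_{l_n}/2$ once $a^{(n)}_{l_n}$ exceeds a suitable power of $q_{k_n}q'_{l_n-1}$ — but you do not say this, and you invoke Lemma \ref{ss09} at position $k_n$ only for the unrelated purpose of replacing $q_{k_n-1}$ by $q_{k_n}^{1-\epsilon'}$.) More to the point, the entire case split is unnecessary: by Claim \ref{cdrdeem11} one has $a^{(n)}_s=b_s$ for $s\le l_n$, so $a^{(n)}_{l_n}=b_{l_n}$ is a partial quotient of $\xi$ itself, and $\xi$ is assumed algebraic throughout this contradiction argument. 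Lemma \ref{ss09} applied to $\xi$ with $k=1$ therefore gives $q'_{l_n}\le M\,\overline{q'_{l_n-1}}^{\,1+\epsilon_2}$ for all large $n$ — this is exactly the paper's inequality (\ref{cdrdeemzaax1}) — and it bridges the $q'_{l_n-1}$-to-$q'_{l_n}$ gap in one line, after which a small choice of $\epsilon_1,\epsilon_2$ yields Condition \ref{cfggh1} as you computed.
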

\begin{proof}
Set
\[\begin{matrix}\textsl{M}(b_{l_n+1}
\cdots b_{l_n+m_n})=\textsl{M}(a_{k_n+1} \cdots a_{k_n+m_n})=\begin{pmatrix}p''_{m_n}&p''_{m_n-1}\\q''_{m_n}&q''_{m_n-1}\end{pmatrix}.\\
\end{matrix}\]
Then we have
 $$q'_{l_{n}+m_n}=q'_{l_{n}}p''_{m_n}+q'_{l_{n}-1}q''_{m_n},$$
$$q_{k_{n}+m_n}=q_{k_{n}}p''_{m_n}+q_{k_{n}-1}q''_{m_n},$$
and $$q''_{m_n}\leq p''_{m_n}.$$ Hence
\begin{equation}\label{cdrdeemzrea}q'_{l_{n}+m_n}\leq q'_{l_{n}}(p''_{m_n}+q''_{m_n})\leq
\frac{ 2q'_{l_{n}}q_{k_{n}+m_n}}{q_{k_{n}}}.\end{equation}

By Lemma \ref{ss091} and Claim \ref{cdrdeem1111}, we have
\begin{equation}\label{cdrdeemza}
\frac{1}{72q'_{l_n+m_n+1}q'_{l_n+m_n+3}}\leq|\xi-\beta_{n}|\leq
(\overline{q^{(n)}_{l_n-1}}q_{k_{n}})^{(-2-\epsilon)}.\end{equation}
\bigskip

 Lemma \ref{ss09} implies that for any small positive integer $\epsilon_{2}$, there exists a positive number $M$
 such that
\begin{equation}\label{cdrdeemzaax}
M q'^{2+\epsilon_{2}}_{l_n+m_n}\geq
q'_{l_n+m_n+1}q'_{l_n+m_n+3},\end{equation}and
\begin{equation}\label{cdrdeemzaax1}
M \overline{q^{(n)}_{l_n-1}}^{1+\epsilon_{2}}\geq
q'_{l_n}.\end{equation}

Combining (\ref{cdrdeemzrea}), (\ref{cdrdeemza}),
(\ref{cdrdeemzaax}) and (\ref{cdrdeemzaax1})
 and choosing $\epsilon_{1}$ and
$\epsilon_{2}$ small enough imply the Claim.
 \end{proof}

We are now in the position to prove Theorem \ref{main99}. Set
$$A_n=0 a_1 \cdots a_{k_n},$$ $$A'_n=0 b_1 b_2\cdots
b_{l_n},$$ and
$$B_n=b_{l_n+1} \cdots b_{l_n+m_n}=a_{k_n+1} \cdots a_{k_n+m_n}.$$
Then by Claims \ref{cdrdeem11} and \ref{cdrdeem11fre}, the three
sequences $\{A_n\}_{n\geq1}$, $\{A'_n\}_{n\geq1}$,
$\{B_n\}_{n\geq1}$ satisfy Condition \ref{cfggh1}, and (i) and
(ii) of Condition \ref{c1}. Now applying Theorem \ref{main5} and
Remark \ref{remar} implies that $\xi$ is transcendental and
finishes the proof.
\end{proof}

\begin{proof}[Proof of Theorem \ref{main990}]
Assume that  there exist a sequence $\{\beta_{n}\}_{n\geq0}$ of
distinct elements from $\Theta_{\alpha}$ such that
\begin{equation}|\xi-\beta_{n}|<\|\beta_{n}\|^{-1-\epsilon}.\end{equation}
Then the above proof and notations can be directly applied. In the
quadratic case, by replacing $\{\beta_{n}\}_{n\geq0}$ with a
subsequence, we can assume that $k_{n}=0$,
$$\alpha=[0,\overline{a_1,a_2,\cdots,a_{k}}],$$ and $$\beta_{n}=[0,a^{(n)}_1,a^{(n)}_2,\cdots,a^{(n)}_{l_n},\overline{a_1,a_2,\cdots,a_{k}}],$$
where $a^{(n)}_{l_n}\neq a_{k}$. Hence
\begin{equation}\label{cdremvfdffc}\beta_{n}=\tfrac{p^{(n)}_{l_n-1}\alpha+p^{(n)}_{l_n}}
{q^{(n)}_{l_n-1}\alpha+q^{(n)}_{l_n}},\end{equation} and
\begin{equation}\label{cdremvfdffc}\|\beta_{n}\|=
|\tfrac{(q^{(n)}_{l_n-1}\alpha+q^{(n)}_{l_n})(q^{(n)}_{l_n-1}\alpha^{\sigma}+q^{(n)}_{l_n})}{\alpha-\alpha^{\sigma}}|.\end{equation}

It is well-known that the Galois conjugate of $\alpha$ is
$$\alpha^{\sigma}=-[a_{k};\overline{a_{k-1},\cdots,a_1,a_{k}}].$$
Hence by Lemma \ref{ss091}, we have {\setlength{\arraycolsep}{0pt}
\begin{eqnarray} \label{for67xds}
&&|q^{(n)}_{l_n-1}\alpha^{\sigma}+q^{(n)}_{l_n})|\\
&=&q^{(n)}_{l_n-1}|[a_{k};\overline{a_{k-1},\cdots,a_1,a_{k}}]-[a^{(n)}_{l_n};\cdots,a^{(n)}_1]|\nonumber \\
&\geq&\tfrac{q^{(n)}_{l_n-1}}{72a_{k-1}a_{k-2}}.\nonumber
\end{eqnarray}
} Hence there exists a positive constant $M$ (depends on $\alpha$)
such that
\begin{equation}\label{cdremvfdffcwd}\|\beta_{n}\|\geq Mq^{(n)}_{l_n-1}q^{(n)}_{l_n}.\end{equation}

Now the rest proof proceeds exactly as before.
\end{proof}

We close this paper with a question. In \cite{ab1}, Theorem
\ref{main3} was generalized to the case of several irrational real
algebraic numbers. What can we say about
 the continued fraction expansions of several
irrational real algebraic numbers?

\end{document}